\newtheorem{definition}{Definition}
\newtheorem{remark}{Remark}
\newtheorem{proposition}{Proposition}
\newtheorem{theorem}{Theorem}
\newtheorem{corollary}{Corollary}
\newtheorem{lemma}{Lemma}
\newcommand{\ConC}[1]{\vspace{0.4cm}\\ (C)\hspace*{1cm}\parbox{12cm}{\em #1}\\ [0.3cm]}
\newcommand\op[1]{\mathop{\rm #1}\nolimits}
\newcommand{\n}{\mathfrak n}
\newcommand{\g}{\mathfrak g}
\newcommand{\z}{\mathfrak z}
\newcommand{\la}{\langle}
\newcommand{\ra}{\rangle}
\newcommand\R{\mathbb{R}}
\newcommand\C{\mathbb{C}}
\renewcommand\H{\mathbb{H}}
\renewcommand\O{\mathbb{O}}
\DeclareMathOperator{\End}{End}
\DeclareMathOperator{\spn}{span}
\DeclareMathOperator{\Id}{Id}
\DeclareMathOperator{\Cl}{Cl}
\newcommand\bond[1]{\draw (#1) -- +(1,0)}
\newcommand\dotbond[1]{\draw[dotted] (#1) edge +(1,0)}
\newcommand\tdots[1]{\draw (#1) ++(0.55,0) node {$\cdots$}}
\newcommand\diagbond[2]{
	\ifthenelse{\equal{#1}{u}}{
		\draw[semithick] (#2) -- +(1,0.5); }{}
	\ifthenelse{\equal{#1}{d}}{
		\draw[semithick] (#2) -- +(1,-0.5); }{} }
\newcommand\dbond[2]{
	\draw (#2) ++(0.03,0.03) -- +(0.94,0);
	\draw (#2) ++(0.03,-0.03) -- +(0.94,0);
	\ifthenelse{\equal{#1}{r}}{
		\draw[semithick] (#2) ++(0.6,0) ++(-0.15,0.2) -- ++(0.15,-0.2) -- +(-0.15,-0.2); }{}
	\ifthenelse{\equal{#1}{l}}{
		\draw[semithick] (#2) ++(0.45,0) ++(0.15,0.2) -- ++(-0.15,-0.2) -- +(0.15,-0.2); }{} }
\newcommand\tbond[2]{
	\draw (#2)  -- +(1,0);
	\draw (#2) ++(0.05,0.06) -- +(0.9,0);
	\draw (#2) ++(0.05,-0.06) -- +(0.9,0);
	\ifthenelse{\equal{#1}{r}}{
		\draw[semithick] (#2) ++(0.6,0) ++(-0.15,0.2) -- ++(0.15,-0.2) -- +(-0.15,-0.2); }{}
	\ifthenelse{\equal{#1}{l}}{
		\draw[semithick] (#2) ++(0.45,0) ++(0.15,0.2) -- ++(-0.15,-0.2) -- +(0.15,-0.2); }{} }
\newcommand\tcirc[3]{
	\ifthenelse{\equal{#1}{w}}{\filldraw[fill=white,draw=black] (#2) circle (0.08);}{}%
	\ifthenelse{\equal{#1}{b}}{\filldraw[black] (#2) circle (0.08);}{}%
	\draw (#2) node[above=2pt] {#3};
	}
\newcommand\tcross[2]{
	\draw (#1) node[above=2pt] {#2};
	\draw (#1) ++(-0.12,-0.12)-- +(0.24, 0.24);
	\draw (#1) ++(-0.12, 0.12)-- +(0.24,-0.24);
	}
\newcommand\tstar[2]{
	\draw[color=red] (#1) node {\Large$*$};
	\draw (#1) node[above=2pt] {#2};
	}
\newcommand\tsquare[2]{
		\draw[semithick,color=blue] (#1) ++(-0.15,-0.15) rectangle ++(0.3,0.3);
		\tcross{#1}{#2};
		}
\newcommand\DDnode[3]{
\ifthenelse{\equal{#1}{w}}{\tcirc{w}{#2}{#3}}{}		
\ifthenelse{\equal{#1}{b}}{\tcirc{b}{#2}{#3}}{}		
\ifthenelse{\equal{#1}{x}}{\tcross{#2}{#3}}{}		
\ifthenelse{\equal{#1}{s}}{\tstar{#2}{#3}}{}		
\ifthenelse{\equal{#1}{q}}{\tsquare{#2}{#3}}{}		
}
\begin{document}

\title{Rigidity of 2-step Carnot groups}

\author[M. Godoy Molina, B. Kruglikov, I. Markina, A. Vasil'ev]{Mauricio Godoy Molina, Boris Kruglikov,\\
Irina Markina, Alexander Vasil'ev}

\address{ {\it M. Godoy Molina:} Departamento de Matem\'atica y Estad\'istica, Universidad de la Frontera, Chile.\quad {\it E-mail address}: \rm{\texttt{mauricio.godoy@ufrontera.cl}}}

\address{{\it B. Kruglikov:}  Department of Mathematics and Statistics, UiT the Arctic University of Norway,
Troms\o\ 90-37, Norway.\newline
\hphantom{B} Department of Mathematics and Natural Sciences, University of Stavanger, 
40-36 Stavanger, Norway.
\quad {\it E-mail address}: \rm{\texttt{boris.kruglikov@uit.no}}}

\address{ {\it I. Markina:}  Department of Mathematics, University of Bergen, Norway.}
\email{irina.markina@uib.no}

\address{ {\it A. Vasil'ev:}
 Department of Mathematics, University of Bergen, Norway.}
\email{alexander.vasiliev@uib.no}

\thanks{The first author is partially supported by the grants Anillo ACT 1415 PIA CONICYT and DI17-0147 from Universidad de La Frontera. The second author is grateful to the Mainz Institute for Theoretical Physics
(MITP) for its hospitality and partial support during the conference GGSUSY-2017, 
where this work was reported and discussed.
The third and fourth authors are partially supported by the grants of the Norwegian Research Council \#239033/F20 and EU FP7 IRSES program STREVCOMS, grant no. PIRSES-GA-2013-612669.}

\subjclass[2010]{17B30,17B70, 16W55, 22E60}

\keywords{Clifford algebra, Clifford module, Tanaka prolongation, pseudo $H$-type algebra,
$J$-type algebra, $J^2$-condition, rigidity}

\begin{abstract}
In the present paper we study the rigidity of 2-step Carnot groups, or equivalently, of graded 2-step
nilpotent Lie algebras. We prove the alternative that depending on bi-dimensions of
the algebra, the Lie algebra structure makes it either always of infinite type or generically rigid,
and we specify the bi-dimensions for each of the choices. Explicit criteria for rigidity
of pseudo $H$- and $J$-type algebras are given. In particular, we establish the relation of
the so-called $J^2$-condition to rigidity, and we explore these conditions in relation to
pseudo $H$-type algebras.
\end{abstract}
\maketitle


\section{Introduction}

The study of $H$(eisenberg)-type algebras started in the 80's with A. Kaplan's seminal paper~\cite{Ka1}. These graded 2-step nilpotent Lie algebras are intricately related to representations of Clifford algebras of vector spaces endowed with a positive definite inner product, and have been extensively studied for the past 35 years, see for example~\cite{CDKR,E,E1,Ka3,KT,LT,OW}.

The extension of the work of Kaplan to Clifford algebras over non-degenerate scalar product spaces is more delicate and has only been treated in detail recently~\cite{BO,Ci,CP,FM,FMV,GKM}. In this paper, we will refer to such extensions as pseudo $H$-type algebras.

Two important algebraic results concerning the classical $H$-type algebras with a positive definite scalar product are a complete characterization of rigid $H$-type algebras, that are those with a finite Tanaka prolongation, and those $H$-type algebras satisfying a Clifford algebraic requirement known as the $J^2$-condition, see~\cite{CDKR}. Both algebraic conditions have deep implications in other aspects of the study of $H$-type groups. The rigidity of most $H$-type algebras is an obstruction to the development of an analytic deformation theory, while the presence of the $J^2$-condition has profound geometric consequences on their groups, for example, they are transitive isometry subgroups of hyperbolic spaces, they appear as the nilpotent part of Iwasawa decompositions of real rank one groups $G=KAN$, and moreover, the group $AN$ is symmetric if and only if the Lie algebra of $N$ satisfies the $J^2$-condition~\cite{CDKR}.
In this paper we discuss the analogs in the pseudo $H$-type context, and relate this to the
split versions of the division algebras.

The fact that the Lie algebras obtained by non-degenerate indefinite bilinear forms has not been duly addressed is surprising, especially since Clifford algebras defined by non-degenerate scalar product have played a fundamental role in mathematics and physics, see~\cite{Lam,LM,Lo}. An intimately related object to the pseudo $H$-type algebras that appears naturally in mathematical physics is the notion of extended (super-)Poincar\'e algebras, see~\cite{AC,ACDV,AS1,AS2}. Some of our results have analogues in this theory. 

An important problem in sub-Riemannian geometry is to detect whether the family of automorphisms of a given non-holonomic structure on a manifold is finite dimensional~\cite{Met,OW,Pa}. The model situation is the rigidity of Carnot groups, determined by the property whether the transformation group of a left-invariant bracket generating distribution on a Lie group is a Lie group itself.
Equivalently, linearization reduces the problem of finite-dimensionality of the automorphism group of a non-holonomic geometric structure to that for the Tanaka prolongation of an associated graded nilpotent Lie algebra. This problem is non-trivial already for 2-step nilpotent algebras, which is the main subject of our work. In the present paper we give criteria of the finite dimensionality of the Tanaka prolongation for the generalizations of $H$-type structures discussed above, and clarify the situation with the general 2-step nilpotent Lie algebras $\n=\n_{-2}\oplus\n_{-1}$ depending on their bi-dimensions $(\dim\n_{-2},\dim\n_{-1})$.

The paper is structured as follows. Section~\ref{sec:setup} is devoted to the main concepts and notations that will be in use throughout the paper. In order to keep track of our hypotheses, we have introduced the notion of $M$-type and pseudo $J$-type algebras that generalize the known objects in the positive definite context. Section~\ref{sec:rigidity} is dedicated to proving rigidity of a class of real graded 2-step nilpotent Lie algebras with the center of dimension $\geq3$ via the so-called rank one criterion that we recall and re-interpret. This class of algebras contains some important examples previously considered in the literature, see for example~\cite{OW}. This result then applies to pseudo $H$-type algebras and other cases. Section~\ref{sec:J2} deals with the $J^2$-condition of pseudo $J$-type algebras and their relation to rigidity. 

The next two sections are devoted to investigation of the generic rigidity of graded 2-step nilpotent Lie
algebras, depending on their bi-dimensions that complements the known results of P.Pansu and P.Eberlein.
We discuss the moduli spaces of the graded 2-step nilpotent Lie algebras and describe the position 
of the pseudo $H$-type algebras among the rigid ones.

We study the pseudo $H$-type algebras that satisfy the $J^2$-condition in Section~\ref{sec:J2Htype}. We prove that the classical Abelian, Heisenberg, quaternionic and octonionic $H$-type algebras, and their split analogs that we introduce exhaust all possible 
pseudo $H$-type algebras with the $J^2$-condition. 
In Appendix A we relate these algebras to division algebras and their split versions. 
In Appendix B we relate them to the nilradicals of parabolics in simple Lie algebras.


\section{Pseudo $J$-type and pseudo $H$-type algebras}\label{sec:setup}

Let $\n=\n_{-2}\oplus\n_{-1}$ be a real or complex graded 2-step nilpotent Lie algebra, and let $\langle\cdot,\cdot\rangle$ be a non-degenerate real or complex symmetric bilinear form. We assume that the restriction $\langle\cdot,\cdot\rangle_{\n_{-2}}$ of $\langle\cdot,\cdot\rangle$ to the subspace $\n_{-2}$ is also non-degenerate and the decomposition $\n=\n_{-2}\oplus\n_{-1}$ is orthogonal (such a choice can be made on any 2-step algebra $\n$).
We call the pair $(\n,\langle\cdot,\cdot\rangle)$ an {\it $M$-type Lie algebra}, since such objects have been referred to as
``metric Lie algebras'' in the real case with a positive definite symmetric bilinear form, see, for example~\cite{E1}. A complex Lie algebra may have different real forms. Each real form carries a real non-degenerate symmetric bilinear form, whose complexification coincides with the original complex non-degenerate symmetric bilinear form.

\begin{definition}\label{def:Jrc}
Let $(\n,\langle\cdot,\cdot\rangle)$ be an $M$-type Lie algebra. The linear representation
$J\colon\n_{-2}\to \End(\n_{-1})$ defined by
\begin{equation}\label{def:J}
\la J_zx,y\ra_{\n_{-1}}=\la z,[x,y]\ra_{\n_{-2}}\quad\text{for all}\quad x,y\in\n_{-1},\ \ z\in\n_{-2}.
\end{equation}
is called the $J$-map of $\n$.
\end{definition}

\begin{definition}\label{def:Jtype}
A real $M$-type algebra $(\n,\langle\cdot,\cdot\rangle)$ is of pseudo $J$-type if there is an orthonormal basis $\{z_1,\ldots,z_m\}$ for $\n_{-2}$ such that the $J$-maps satisfy the condition
\begin{equation}\label{eq:J2Cliff}
J_{z_i}^2=\pm\Id_{\n_{-1}},\qquad i=1,\ldots,m.
\end{equation}
\end{definition}

Note that this definition extends the notion of a $J$-type algebra in~\cite[Section~6]{OW}. It is relevant to note that we consider the operators $J_{z_i}$  as complex structures for some indices $i=1,\ldots,m$, but we also allow involutions for the rest of them. Moreover, the identities~\eqref{eq:J2Cliff} are not necessarily related to the Clifford condition $J_{z_i}^2=-\langle z_i,z_i\rangle_{\n_{-2}}{\rm Id}_{\n_{-1}}$ defined on the chosen basis for $\n_{-2}$, as it has been used in~\cite{OW} in the presence of a positive definite scalar product. A particular case of these pseudo $J$-type algebras are the pseudo $H$-type algebras.

\begin{definition}\label{def:Htype}
A real $M$-type algebra $(\n,\langle\cdot,\cdot\rangle)$ is said to be of pseudo $H$-type if the $J$-maps satisfy the Clifford relations
\begin{equation}\label{eq:anticom}
J_{z_i}J_{z_j}+J_{z_j}J_{z_i}=-2\langle z_i,z_j\rangle_{\n_{-2}}{\rm Id}_{\n_{-1}},\quad i,j=1,\ldots,m,
\end{equation}
for a basis $\{z_1,\ldots,z_m\}$ of $\n_{-2}$ (that can be chosen orthonormal).
\end{definition}

Note that equation~\eqref{eq:anticom} implies that the $J$-map can be extended to a representation of the Clifford algebra ${\rm Cl}(\n_{-2}, \langle\cdot,\cdot\rangle_{\n_{-2}})$ on the space $\n_{-1}$. Condition~\eqref{eq:anticom} implies condition~\eqref{eq:J2Cliff} for a special choice of signs, but not conversely. The classical $J$-type and $H$-type algebras originated from the papers~\cite{Ka1,Met} 
in which $H$-type algebras were defined as generalizations of the Heisenberg algebra endowed 
with a positive definite scalar product. Their pseudo-analogs were introduced in~\cite{Ci,GKM}.

The construction of pseudo $H$-type Lie algebras is delicate, so we postpone its precise description to Section~\ref{sec:J2Htype}. Nevertheless, let us describe them briefly here. We denote by $\mathbb R^{r,s}$ the vector space $\mathbb R^{r+s}$ equipped with the metric
 \[
\langle v,w \rangle_{r,s}=\sum_{i=1}^{r}v_iw_i-\sum_{j=r+1}^{r+s}v_jw_j,\quad v=(v_i),w=(w_i)\in \mathbb R^{r+s}.
 \]
The pseudo $H$-type Lie algebra $\n^{r,s}(V)$ is a real graded 2-step nilpotent Lie algebra structure on the space ${\mathbb R}^{r,s}\oplus V$, where $V$ is an admissible ${\rm Cl}({\mathbb R}^{r,s})$-module
(more details in Section \ref{sec:J2Htype}). If the Clifford module $V$ is irrelevant to the statement we will denote by $\n^{r,s}$ the class of all pseudo $H$-type algebras with the same center. 
Equivalent definitions of pseudo $H$-type algebras can be found in~\cite{Ci,GKM,Ka1}.

The $H$-type algebras are a special case of a wider class of algebras satisfying the hypothesis ($H$\,),
introduced in~\cite{Met}, which states that
$\omega_\alpha(x,y)=\alpha([x,y])$ is a non-degenerate 2-form on $\n_{-1}$ for any
non-zero $\alpha\in\n_{-2}^*$. 
In~\cite{E,E1,LT,MS} the authors established the following equivalent definitions.

\begin{proposition}\label{prop:Met}
The following statements are equivalent for a real graded 2-step nilpotent Lie algebra $\n=\n_{-2}\oplus\n_{-1}$:
\begin{enumerate}
\item $\n$ satisfies M\'etivier's hypothesis (H).
\item ${\rm ad}_x\colon\n_{-1}\to\n_{-2}$ is surjective for any non-zero $x\in\n_{-1}$.
\item $J_z\colon\n_{-1}\to\n_{-1}$ is a non-degenerate map for any non-zero $z\in\n_{-2}$.
\end{enumerate}
\end{proposition}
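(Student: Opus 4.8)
The plan is to prove the cyclic equivalence of the three statements by establishing the implications $(1)\Rightarrow(2)\Rightarrow(3)\Rightarrow(1)$, relying throughout on the definition of the $J$-map in~\eqref{def:J} and the non-degeneracy of the bilinear forms $\la.\,,.\ra_{\n_{-1}}$ and $\la.\,,.\ra_{\n_{-2}}$. The central observation, which I would isolate first, is that the defining relation $\la J_z x, y\ra_{\n_{-1}} = \la z, [x,y]\ra_{\n_{-2}}$ ties together three linear maps: the bracket $[x,\cdot]=\ad_x$, the $J$-map $z\mapsto J_z$, and the adjoint relationship induced by the two scalar products. Concretely, for fixed $z$ the operator $J_z$ on $\n_{-1}$ is essentially the composition of $\ad_{(\cdot)}$ with the functional $\la z, \cdot\ra$, transported back to $\n_{-1}$ via the non-degenerate form there. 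Making this dictionary precise is what converts each hypothesis into the next.

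First I would show $(1)\Rightarrow(2)$. Suppose $\ad_x$ fails to be surjective for some nonzero $x\in\n_{-1}$; then its image is a proper subspace of $\n_{-2}$, so by non-degeneracy of $\la.\,,.\ra_{\n_{-2}}$ there exists a nonzero $z\in\n_{-2}$ orthogonal to $\op{im}\ad_x$, equivalently a nonzero $\alpha=\la z,\cdot\ra\in\n_{-2}^*$ vanishing on $\op{im}\ad_x$. For this $\alpha$ the form $\omega_\alpha(x,y)=\alpha([x,y])=\la z,[x,y]\ra_{\n_{-2}}$ vanishes for all $y\in\n_{-1}$, so $x$ lies in the kernel of $\omega_\alpha$, contradicting the non-degeneracy demanded by hypothesis~(H). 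Next, $(2)\Rightarrow(3)$: suppose $J_z$ is degenerate for some nonzero $z$, so there is a nonzero $x\in\n_{-1}$ with $J_z x=0$. Then $\la z,[x,y]\ra_{\n_{-2}}=\la J_z x, y\ra_{\n_{-1}}=0$ for all $y$, which says $z\perp\op{im}\ad_x$; since $z\neq0$ and $\la.\,,.\ra_{\n_{-2}}$ is non-degenerate, $\op{im}\ad_x$ cannot be all of $\n_{-2}$, contradicting surjectivity of $\ad_x$.

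Finally, $(3)\Rightarrow(1)$: assuming every $J_z$ with $z\neq0$ is non-degenerate, I would show $\omega_\alpha$ is a non-degenerate 2-form for each nonzero $\alpha\in\n_{-2}^*$. By non-degeneracy of $\la.\,,.\ra_{\n_{-2}}$ I may write $\alpha=\la z,\cdot\ra$ for a unique nonzero $z$, whence $\omega_\alpha(x,y)=\la J_z x, y\ra_{\n_{-1}}$. If $x$ lies in the radical of $\omega_\alpha$, then $\la J_z x, y\ra_{\n_{-1}}=0$ for all $y$, and non-degeneracy of $\la.\,,.\ra_{\n_{-1}}$ forces $J_z x=0$; since $J_z$ is non-degenerate (injective), $x=0$, so $\omega_\alpha$ is non-degenerate, giving~(H).

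The only genuinely delicate point is the careful use of two different non-degenerate forms in tandem: one must verify that passing between $z\in\n_{-2}$ and $\alpha\in\n_{-2}^*$ is a bijection (immediate from non-degeneracy of $\la.\,,.\ra_{\n_{-2}}$) and that ``degenerate'' for the bilinear form $\omega_\alpha$ matches ``non-injective'' for the operator $J_z$ (immediate from non-degeneracy of $\la.\,,.\ra_{\n_{-1}}$). I expect the main obstacle to be purely bookkeeping rather than conceptual: keeping the two radicals, the two adjoints, and the identification of functionals with vectors consistent, especially in the complex or indefinite setting where ``non-degenerate'' must be read as the condition on the form rather than as positivity. Once the dictionary $\omega_\alpha(\cdot,\cdot)=\la J_z(\cdot),\cdot\ra_{\n_{-1}}$ is fixed, all three implications reduce to the same elementary rank argument applied to $J_z$.
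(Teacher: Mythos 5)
Your proof is correct. Note, however, that the paper itself offers no proof of this proposition: it is stated as a known result with a citation to the literature (Eberlein, Levstein--Tiraboschi, M\"uller--Seeger), so there is no argument in the text to compare yours against. What you have written is a complete, self-contained replacement for that citation. The three implications all reduce, as you say, to the single dictionary $\omega_\alpha(x,y)=\la J_z x,y\ra_{\n_{-1}}$ under the identification $\alpha=\la z,\cdot\,\ra$, which is a bijection between $\n_{-2}\setminus\{0\}$ and $\n_{-2}^*\setminus\{0\}$ precisely because $\la.\,,.\ra_{\n_{-2}}$ is non-degenerate; and the passage from ``$\la J_zx,y\ra_{\n_{-1}}=0$ for all $y$'' to ``$J_zx=0$'' uses non-degeneracy of $\la.\,,.\ra_{\n_{-1}}$ exactly where you invoke it. One point worth making explicit if you write this up: in the indefinite setting the orthogonal complement of a proper subspace $W\subsetneq\n_{-2}$ may intersect $W$, but it still has dimension $\dim\n_{-2}-\dim W>0$, which is all your step $(1)\Rightarrow(2)$ needs; your phrasing via a nonzero annihilating functional already handles this correctly. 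The equivalence is also manifestly independent of which non-degenerate form is chosen, consistent with Remark~\ref{RK1} of the paper.
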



\section{Rigidity of $M$-type algebras}\label{sec:rigidity}

Given a graded nilpotent Lie algebra $\n=\bigoplus_{i=1}^s\n_{-i}$ generated by $\n_{-1}$, there is an algebraic procedure to compute symmetries of $\n_{-1}$, called the Tanaka prolongation~\cite{T}.
This is the maximal graded Lie algebra $\hat\n=\n_{-s}\oplus\dots\oplus\n_{-1}\oplus\n_0\oplus\dots$ with
$\hat\n_{<0}=\n$. There is a vast literature on this topic, and we refer the reader to~\cite{AK,CS,M93,OW,Z}
and the references therein for an overview.

A graded 2-step nilpotent Lie algebra ${\mathfrak n} = {\mathfrak n}_{-2} \oplus{\mathfrak n}_{-1}$ is called {\it rigid} or of {\it finite type} if its {\it Tanaka prolongation} $\hat{\mathfrak n} = {\mathfrak n}_{-2} \oplus {\mathfrak n}_{-1} \oplus {\mathfrak n}_0\oplus\cdots$ is finite dimensional. Otherwise it is called of {\it infinite type}. These definitions are valid for either complex or real Lie algebras. We will assume throughout the text that $\mathfrak{n}$ is {\it fundamental} or equivalently {\it stratified}, i.e.\ $\mathfrak{n}_{-1}$ generates
$\mathfrak{n}$, and  in the 2-step case that $\mathfrak{n}_{-1}$ contains no central elements, so
 the center $\z$ of $\n$ is exactly $\n_{-2}$, see \cite{T}.

A criterion to detect whether a given {\em complex} graded 2-step nilpotent Lie algebra ${\mathfrak n} = {\mathfrak n}_{-2} \oplus{\mathfrak n}_{-1}$ is rigid is the {\it corank one criterion} (see~\cite[Theorem 1]{DR} and~\cite{OW2,K}, which are based on~\cite[Corollary 2 of Theorem 11.1]{T}). It states that ${\mathfrak n}$ is of infinite type if and only if there exist $x\in{\mathfrak n}_{-1}$ and a hyperplane $\Pi\subset{\mathfrak n}_{-1}$, such that
\[
[x,y]=0\quad\text{for all}\quad y\in\Pi.
\]
A key observation is that the corank one criterion can be rewritten conveniently in the case when the adjoint map of $\n$ induces endomorphisms of $\n_{-1}$ through a non-degenerate symmetric bilinear form, as in Definition~\ref{def:J}. We conclude that the above is equivalent to the existence of a non-zero vector $x \in{\mathfrak n}_{-1}$ with
\[
J_{{\mathfrak n}_{-2}}x\in\Pi^\bot,\quad\text{or}\quad \dim_{\mathbb C} J_{{\mathfrak n}_{-2}}x=\dim(\Pi^\bot)=1.
\]
This means that $J_{{\mathfrak n}_{-2}}$ is a one dimensional complex subspace of $\End(\n_{-1})$
and therefore there exists
 $$
L\subset{\mathfrak n}_{-2},\quad {\rm codim_{\mathbb C}}\; L=1,\quad\text{such that}\quad J_Lx=0.
 $$
The last condition is equivalent to the property $x\in\bigcap_{z\in L}\ker(J_z)$, so we conclude:
 \begin{gather}
\n\textit{ is of infinite type if and only if there exists a subspace }\notag\\
L\subset{\mathfrak n}_{-2},\quad {\rm codim}_{\mathbb C}\;L=1,\quad\text{such that}\quad\bigcap_{z\in L}\ker(J_z)\not=\{0\}.
\label{Lcriterion}
 \end{gather}

Notice that for real graded 2-step nilpotent Lie algebras $\n$ the criterion fails in its sufficient part
and the complexification is required, i.e. condition~\eqref{Lcriterion} will be applied to
the complexifications $\n^\mathbb{C}$ and $J^\mathbb{C}$.

 \begin{remark}\label{RK1}
{\rm Even though this rigidity criterion is formulated in terms of $M$-type algebras, viz.\
the definition of the operators $J_z$ involves a choice of a scalar product on $\n$, the output, 
namely the alternative whether the algebra $\n$ is of finite or infinite type, 
does not depend on this choice.}
 \end{remark}

Since the Tanaka prolongation is a linear algebra operation, the following folklore-known statement relates the complexification and the prolongation of real Lie algebras.

\begin{proposition}\label{prop:complex}
Let ${\mathfrak n}$ be a real graded nilpotent Lie algebra of any step, and let ${\mathfrak n}^{\mathbb C}$ be its complexification. Then $\hat{\mathfrak n}^{\mathbb C}=\widehat{{\mathfrak n}^{\mathbb C}}$.
 \end{proposition}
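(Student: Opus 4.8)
The plan is to exploit the fact, already emphasized in the text, that the Tanaka prolongation is a purely \emph{linear-algebraic} construction carried out degree by degree, and that complexification $W\mapsto W^{\mathbb C}=W\otimes_{\mathbb R}\mathbb C$ is an exact functor on finite-dimensional vector spaces. Concretely, I would first recall Tanaka's inductive definition: the non-positive part is $\hat{\mathfrak{n}}_{\leq 0}=\mathfrak{n}\oplus\mathrm{der}_0(\mathfrak{n})$, where $\mathrm{der}_0$ denotes the grading-preserving derivations, and for $k\geq 1$ the component $\hat{\mathfrak{n}}_k$ is the space of degree-$k$ maps $f\in\bigoplus_{p<0}\mathrm{Hom}(\mathfrak{n}_p,\hat{\mathfrak{n}}_{p+k})$ satisfying the derivation identity
\[
f([X,Y])=[f(X),Y]+[X,f(Y)],\qquad X,Y\in\mathfrak{n},
\]
where the brackets on the right are those already built in lower degrees (note $p+k<k$, so each target is defined). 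The crucial observation is that each $\hat{\mathfrak{n}}_k$ is the kernel of an explicit \emph{real-linear} map whose matrix entries are polynomial in the structure constants of $\mathfrak{n}$.

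The heart of the argument is then an induction on the degree $k$, establishing simultaneously that $(\hat{\mathfrak{n}}_k)^{\mathbb C}=(\widehat{\mathfrak{n}^{\mathbb C}})_k$ as subspaces and that the two bracket operations agree under the canonical identification. For $k<0$ the claim is immediate from the definition of complexification, while for $k=0$ it reduces to $(\mathrm{der}_0\mathfrak{n})^{\mathbb C}=\mathrm{der}_0(\mathfrak{n}^{\mathbb C})$, which holds because the derivation condition is a real-linear constraint on the unknown endomorphism and complexification commutes with kernels. For the inductive step I would invoke two standard functorial facts, valid since every graded component is finite-dimensional (the finiteness being guaranteed by $\mathfrak{n}$ fundamental): complexification commutes with $\mathrm{Hom}$, i.e.\ $\mathrm{Hom}_{\mathbb R}(U,V)^{\mathbb C}\cong\mathrm{Hom}_{\mathbb C}(U^{\mathbb C},V^{\mathbb C})$, and with kernels, $\ker(A^{\mathbb C})=(\ker A)^{\mathbb C}$. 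Because the structure constants of $\mathfrak{n}^{\mathbb C}$ coincide with those of $\mathfrak{n}$, the linear map cutting out $(\widehat{\mathfrak{n}^{\mathbb C}})_k$ is precisely the complexification of the one cutting out $\hat{\mathfrak{n}}_k$, and applying the two facts above identifies their kernels.

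What remains — and what I expect to be the only genuinely delicate point — is verifying that the identification is one of graded \emph{Lie algebras} and not merely of graded vector spaces. One must check that the Tanaka bracket, defined on the prolongation through evaluation and composition of the derivation maps together with the original bracket on $\mathfrak{n}$, extends $\mathbb C$-bilinearly in a manner compatible with complexification. This is a matter of careful bookkeeping: each defining operation (evaluation $f\mapsto f(X)$, composition, and the bracket on $\mathfrak{n}$) is $\mathbb R$-bilinear, so its $\mathbb C$-bilinear extension is exactly the corresponding operation for $\mathfrak{n}^{\mathbb C}$. Carrying the induction through with the bracket included then yields a graded Lie algebra isomorphism $\hat{\mathfrak{n}}^{\mathbb C}\cong\widehat{\mathfrak{n}^{\mathbb C}}$, which is the assertion. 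I would finally remark that the hypotheses that $\mathfrak{n}$ is graded and fundamental enter only to ensure degreewise finite-dimensionality, so the result holds for nilpotent algebras of arbitrary step, as stated.
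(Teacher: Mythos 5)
Your argument is correct and is essentially the paper's own reasoning: the paper states this proposition without a detailed proof, as a well-known consequence of the fact that the Tanaka prolongation is a degree-by-degree linear-algebra construction, and your induction (using that complexification commutes with $\mathrm{Hom}$ and with kernels of $\mathbb{R}$-linear maps in finite dimensions) is exactly the rigorous form of that observation. Your closing check that the bracket, being built from $\mathbb{R}$-bilinear operations (evaluation, composition, and the bracket on $\mathfrak{n}$), extends $\mathbb{C}$-bilinearly in a way compatible with the identification is the right final step, so the isomorphism is one of graded Lie algebras and not merely of graded vector spaces.
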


This statement has the following immediate implications.

\begin{corollary}\label{coro:complex}
Let ${\mathfrak n}$ and $\tilde{\mathfrak n}$ be two graded nilpotent Lie algebras such that ${\mathfrak n}^{\mathbb C}\cong\tilde{\mathfrak n}^{\mathbb C}$. Then they are either simultaneously rigid or of infinite type.
\end{corollary}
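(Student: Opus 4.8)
The plan is to deduce the corollary directly from Proposition~\ref{prop:complex} together with two elementary functorial facts, so that the statement follows without any new computation.

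First I would record the observation that rigidity passes freely between a real algebra and its complexification. Since the Tanaka prolongation $\hat{\mathfrak n}$ is, in particular, a real graded vector space and $\dim_{\mathbb C}(\hat{\mathfrak n})^{\mathbb C}=\dim_{\mathbb R}\hat{\mathfrak n}$, the algebra $\hat{\mathfrak n}$ is finite dimensional if and only if its complexification $(\hat{\mathfrak n})^{\mathbb C}$ is. Invoking Proposition~\ref{prop:complex}, which identifies $(\hat{\mathfrak n})^{\mathbb C}$ with $\widehat{{\mathfrak n}^{\mathbb C}}$, I conclude that ${\mathfrak n}$ is rigid if and only if ${\mathfrak n}^{\mathbb C}$ is rigid; the same equivalence holds verbatim for $\tilde{\mathfrak n}$.

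Second, I would make explicit the fact that the Tanaka prolongation is canonically determined by the underlying graded Lie algebra: any isomorphism of graded nilpotent Lie algebras extends uniquely to a graded isomorphism of their prolongations, computed component by component, because each prolongation space $\hat{\mathfrak n}_k$ with $k\geq 0$ is defined as a space of degree-$k$ derivations of the bracket, an object intrinsic to the graded Lie algebra structure. Hence the hypothesis ${\mathfrak n}^{\mathbb C}\cong\tilde{\mathfrak n}^{\mathbb C}$ yields $\widehat{{\mathfrak n}^{\mathbb C}}\cong\widehat{\tilde{\mathfrak n}^{\mathbb C}}$, so one of these complex prolongations is finite dimensional precisely when the other is, i.e.\ ${\mathfrak n}^{\mathbb C}$ is rigid exactly when $\tilde{\mathfrak n}^{\mathbb C}$ is.

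Chaining the two equivalences produces ${\mathfrak n}\text{ rigid}\iff{\mathfrak n}^{\mathbb C}\text{ rigid}\iff\tilde{\mathfrak n}^{\mathbb C}\text{ rigid}\iff\tilde{\mathfrak n}\text{ rigid}$, which is exactly the asserted dichotomy that ${\mathfrak n}$ and $\tilde{\mathfrak n}$ are simultaneously rigid or simultaneously of infinite type. I do not expect any substantial obstacle here, since the result is purely formal once Proposition~\ref{prop:complex} is in hand; the only point deserving explicit mention is the functoriality of the prolongation under graded isomorphisms, as this is precisely what transports rigidity across the assumed complex isomorphism.
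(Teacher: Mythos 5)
Your proof is correct and follows exactly the route the paper intends: the corollary is stated as an immediate consequence of Proposition~\ref{prop:complex}, and your argument simply makes explicit the two facts the paper leaves implicit (invariance of the Tanaka prolongation under graded isomorphism, and finiteness of $\dim_{\mathbb R}\hat{\mathfrak n}$ being equivalent to finiteness of $\dim_{\mathbb C}(\hat{\mathfrak n})^{\mathbb C}$). Note that your first equivalence, ${\mathfrak n}$ rigid $\iff{\mathfrak n}^{\mathbb C}$ rigid, is precisely the paper's Corollary~\ref{coro:realcomp}, proved there by the same dimension count you use.
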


\begin{corollary}\label{coro:realcomp}
Let ${\mathfrak n}$ be a real graded nilpotent Lie algebra of any step with the complexification ${\mathfrak n}^{\mathbb C}$. Then $\n$ and $\n^{\mathbb C}$ are either simultaneously rigid or of infinite type in their respective categories.
\end{corollary}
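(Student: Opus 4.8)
The plan is to reduce the statement to Proposition~\ref{prop:complex}, which already identifies the prolongation of the complexification with the complexification of the prolongation, $\widehat{\n^{\mathbb{C}}} = \hat{\n}^{\mathbb{C}}$. Once this identification is available, the corollary becomes nothing more than a comparison of dimensions across the real and complex categories.

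First I would recall the elementary fact that for any real vector space $W$ the complexification $W^{\mathbb{C}} = W \otimes_{\mathbb{R}} \mathbb{C}$ satisfies $\dim_{\mathbb{C}} W^{\mathbb{C}} = \dim_{\mathbb{R}} W$; in particular $W^{\mathbb{C}}$ is finite-dimensional over $\mathbb{C}$ exactly when $W$ is finite-dimensional over $\mathbb{R}$. Applying this to the graded Lie algebra $\hat{\n}$ and combining it with Proposition~\ref{prop:complex} yields
\[
\dim_{\mathbb{C}} \widehat{\n^{\mathbb{C}}} = \dim_{\mathbb{C}} \hat{\n}^{\mathbb{C}} = \dim_{\mathbb{R}} \hat{\n}.
\]
By definition $\n$ is rigid if and only if $\hat{\n}$ is finite-dimensional over $\mathbb{R}$, while $\n^{\mathbb{C}}$ is rigid if and only if $\widehat{\n^{\mathbb{C}}}$ is finite-dimensional over $\mathbb{C}$; the displayed equality shows that one condition holds precisely when the other does, which is exactly the asserted equivalence.

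I do not expect a genuine obstacle here, since the corollary is essentially a bookkeeping consequence of Proposition~\ref{prop:complex}. The only point deserving a word of care is that the prolongation can be infinite-dimensional, so the dimension equality should be read grade-by-grade. As each graded component of the Tanaka prolongation is itself finite-dimensional and complexification acts componentwise, the real algebra has finitely many nonzero graded pieces if and only if its complexification does, so finite type transfers faithfully between the two categories.
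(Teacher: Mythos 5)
Your proof is correct and takes essentially the same route as the paper: both reduce the corollary to Proposition~\ref{prop:complex} combined with the elementary fact that $\dim_{\mathbb C}\n^{\mathbb C}=\dim_{\mathbb R}\n$, so that finite-dimensionality of $\hat\n$ and of $\widehat{\n^{\mathbb C}}$ are equivalent. Your closing grade-by-grade remark merely makes explicit a point the paper leaves implicit, and does not change the argument.
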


\begin{proof}
The claims follow from the equality $\dim_{\mathbb R}\n=\dim_{\mathbb C}\n^{\mathbb C}$.
\end{proof}

The idea of the following constructions is to employ the complexification and criterion \eqref{Lcriterion}
in order to detect the rigidity of real 2-step nilpotent Lie algebras. The following is a special
case of Theorem 4 in~\cite{DR}, see also~\cite[Lemma 6]{OW}.

\begin{lemma}\label{lem:inftypeC}
Let ${\mathfrak n}={\mathfrak n}_{-1}\oplus{\mathfrak n}_{-2}$ be a real $M$-type algebra.
If $\dim\n_{-2} = 2$, then $\n$ is of infinite type.
\end{lemma}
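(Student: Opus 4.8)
The plan is to apply the corank-one criterion \eqref{Lcriterion} to the complexification $\n^{\mathbb{C}}$ and then transfer the conclusion back by Corollary~\ref{coro:realcomp}. Since $\dim_{\mathbb{R}}\n_{-2}=2$ gives $\dim_{\mathbb{C}}\n_{-2}^{\mathbb{C}}=2$, a subspace $L\subset\n_{-2}^{\mathbb{C}}$ of complex codimension one is precisely a complex line $L=\mathbb{C}z_0$ with $z_0\neq 0$, and then $\bigcap_{z\in L}\ker(J_z^{\mathbb{C}})=\ker(J_{z_0}^{\mathbb{C}})$. Thus the whole task reduces to producing a single nonzero $z_0\in\n_{-2}^{\mathbb{C}}$ for which the endomorphism $J_{z_0}^{\mathbb{C}}\in\End(\n_{-1}^{\mathbb{C}})$ is degenerate, i.e.\ $\det J_{z_0}^{\mathbb{C}}=0$.

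To find such $z_0$ I would fix a complex basis $\{z_1,z_2\}$ of $\n_{-2}^{\mathbb{C}}$ and study the determinant of the pencil $z=\lambda_1 z_1+\lambda_2 z_2\mapsto J_z^{\mathbb{C}}=\lambda_1 J_{z_1}^{\mathbb{C}}+\lambda_2 J_{z_2}^{\mathbb{C}}$, which is linear in $z$ by Definition~\ref{def:Jrc}. The function
\[
p(\lambda_1,\lambda_2)=\det\!\left(\lambda_1 J_{z_1}^{\mathbb{C}}+\lambda_2 J_{z_2}^{\mathbb{C}}\right)
\]
is a homogeneous polynomial of degree $n=\dim_{\mathbb{C}}\n_{-1}^{\mathbb{C}}$ in the two complex variables $\lambda_1,\lambda_2$, and $n\geq 1$ because $\n_{-1}$ bracket-generates the $2$-dimensional center. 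If $p\equiv 0$, then every nonzero $z$ already has degenerate $J_z^{\mathbb{C}}$ and we are done. Otherwise $p$ is a nonzero homogeneous polynomial of positive degree in two variables over the algebraically closed field $\mathbb{C}$: dehomogenizing it produces a nontrivial zero $(\lambda_1,\lambda_2)\neq(0,0)$ (for instance, if $\det J_{z_1}^{\mathbb{C}}\neq 0$ the degree-$n$ polynomial $t\mapsto p(t,1)$ has a root $t_0$ by the fundamental theorem of algebra, and if $\det J_{z_1}^{\mathbb{C}}=0$ then $z_0=z_1$ works). The corresponding $z_0=\lambda_1 z_1+\lambda_2 z_2$ is the vector we need.

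With this $z_0$ the line $L=\mathbb{C}z_0$ verifies $\ker(J_{z_0}^{\mathbb{C}})\neq\{0\}$, so by \eqref{Lcriterion} the algebra $\n^{\mathbb{C}}$ is of infinite type, and Corollary~\ref{coro:realcomp} transfers this to $\n$. The conceptual heart of the argument, and the place where both hypotheses are used essentially, is the observation that a pencil of operators parametrized by a \emph{two-dimensional} space has determinant a binary form, which over $\mathbb{C}$ must vanish at some nonzero point. The main subtlety to keep in mind is precisely why one cannot argue directly over $\real$: a real binary form such as $\lambda_1^2+\lambda_2^2$ can be everywhere nonzero, so a real $J_z$ may be invertible for every real $z$ while its complexification fails to be. This is exactly the failure of sufficiency of the criterion over $\real$ recorded after \eqref{Lcriterion}, and it is what forces the passage to $\n^{\mathbb{C}}$ emphasized in Remark~\ref{RK1}.
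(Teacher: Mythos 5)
Your proof is correct and takes essentially the same route as the paper's: complexify, observe that a codimension-one subspace of the two-dimensional center $\n_{-2}^{\mathbb C}$ is a line $\mathbb{C}z_0$, produce a nonzero $z_0$ with $\det J_{z_0}^{\mathbb C}=0$, take $L=\mathbb{C}z_0$ in criterion \eqref{Lcriterion}, and transfer back via Corollary~\ref{coro:realcomp}. The only difference is one of detail: where the paper tersely asserts that some $z=z_1+\lambda z_2$ is null so that $\det J_z=0$, you spell out the justification via the binary determinant form and the fundamental theorem of algebra, including the edge case $\det J_{z_1}^{\mathbb C}=0$ that the paper's one-parameter pencil glosses over.
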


\begin{proof}
Let us complexify $\n$ and use Corollary \ref{coro:realcomp}.
Choose a basis $z_1,z_2$ of ${\mathfrak n}^{\mathbb C}_{-2}$. For some $\lambda\in{\mathbb C}$
the vector $z=z_1+\lambda z_2$ is null and so $\det J_{z_1+\lambda z_2}=0$. Then one can take
$L=\spn\{z\}$ and the claim follows from $\ker(J_z)\neq0$.
\end{proof}

The aim of this section is to generalize~\cite[Theorem 1]{OW} to a much wider class of real Lie algebras. In order to state the result, we introduce the following condition (C): an $M$-type algebra $\n=\n_{-2}\oplus\n_{-1}$ satisfies this condition if
 %
\ConC{There exist three linearly independent vectors $z_1,z_2,z_3\in\n_{-2}$ such that the $J$-maps
$J_{z_i}$ are non-degenerate, and $J_{z_i}J_{z_j}=\sigma_{ij}J_{z_j}J_{z_i}$, for all $i,j\in\{1,2,3\}$,
where $\sigma_{ij}\in\{-1,1\} $ and $\sigma_{12}\sigma_{13}\sigma_{23}=-1$.}


Condition (C) is not too restrictive, and several important systems satisfy it. For instance, it holds
for the pseudo $H$-type algebras as well as in the case when $J_{z_i}$ commutes with both $J_{z_j}$ and $J_{z_k}$, but $J_{z_j}$ and $J_{z_k}$ anti-commute.

 \begin{theorem}\label{th:JJJcomp}
Let $\n=\n_{-2}\oplus\n_{-1}$ be a real $M$-type algebra with $\dim{\mathfrak n}_{-2} \geq 3$ that satisfies condition {\rm (C)}. Then ${\mathfrak n}$ is rigid.
 \end{theorem}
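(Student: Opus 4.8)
The plan is to prove rigidity by contradiction using the complexified corank-one criterion~\eqref{Lcriterion}. Suppose $\n$ is of infinite type. By Proposition~\ref{prop:complex} and Corollary~\ref{coro:realcomp} it suffices to work with the complexification $\n^{\mathbb C}$ and the complexified maps $J^{\mathbb C}_z$. Thus there exists a hyperplane $L\subset\n^{\mathbb C}_{-2}$ of complex codimension one and a nonzero vector $x\in\n^{\mathbb C}_{-1}$ with $J^{\mathbb C}_z x=0$ for every $z\in L$, i.e.\ $x\in\bigcap_{z\in L}\ker(J^{\mathbb C}_z)$. Since $\dim_{\mathbb C}\n^{\mathbb C}_{-2}\geq 3$, the hyperplane $L$ is at least $2$-dimensional, so among the three vectors $z_1,z_2,z_3$ supplied by condition~(C) at least two of them — say after relabeling $z_1,z_2$ — can be arranged to lie in $L$ (more precisely, the span of $z_1,z_2,z_3$ meets $L$ in a subspace of dimension $\geq 2$, which I will exploit).

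The key idea is to extract a contradiction from the (anti)commutation relations. First I would reduce to the case that two of the distinguished vectors lie in $L$: the intersection $\spn_{\mathbb C}\{z_1,z_2,z_3\}\cap L$ has dimension at least two, so I can choose two independent vectors $w_1,w_2$ in it. The cleaner approach, however, is to use the full strength of~(C) directly. Since $x\in\ker(J^{\mathbb C}_z)$ for all $z\in L$ and $\dim_{\mathbb C}(\spn\{z_1,z_2,z_3\}\cap L)\ge 2$, I may assume $J^{\mathbb C}_{z_1}x=J^{\mathbb C}_{z_2}x=0$ for two of the distinguished vectors. Now I apply the nondegenerate operator $J^{\mathbb C}_{z_3}$ (the third distinguished vector, which need not lie in $L$) to the vector $x$, producing $x'=J^{\mathbb C}_{z_3}x$, which is nonzero because $J_{z_3}$ is invertible. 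The relations $J_{z_i}J_{z_3}=\sigma_{i3}J_{z_3}J_{z_i}$ then give
\[
J^{\mathbb C}_{z_i}x'=J^{\mathbb C}_{z_i}J^{\mathbb C}_{z_3}x=\sigma_{i3}J^{\mathbb C}_{z_3}J^{\mathbb C}_{z_i}x=0,\qquad i=1,2,
\]
so $x'$ also lies in $\ker(J^{\mathbb C}_{z_1})\cap\ker(J^{\mathbb C}_{z_2})$. This shows that the common kernel of $J^{\mathbb C}_{z_1}$ and $J^{\mathbb C}_{z_2}$ is invariant under $J^{\mathbb C}_{z_3}$, and likewise (permuting roles) under $J^{\mathbb C}_{z_1}$ and $J^{\mathbb C}_{z_2}$.

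The contradiction should emerge from the sign condition $\sigma_{12}\sigma_{13}\sigma_{23}=-1$, which is the heart of~(C). Restricting the three operators to the common kernel $W=\ker(J^{\mathbb C}_{z_1})\cap\ker(J^{\mathbb C}_{z_2})$ (nonzero, since it contains $x$), I would examine the relations among their restrictions. Two of the operators vanish identically on the kernel they define, but the point is to track how the invertible operator $J_{z_3}$ interacts; combining the three pairwise (anti)commutation signs with the nondegeneracy forces an incompatibility. Concretely, I expect to produce a single nonzero vector simultaneously annihilated by an operator that the sign bookkeeping shows must also be invertible on it, or to derive $+1=-1$ from tracking the product $\sigma_{12}\sigma_{13}\sigma_{23}$ around the kernel. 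The main obstacle will be organizing the case analysis so that the odd-product condition $\sigma_{12}\sigma_{13}\sigma_{23}=-1$ is genuinely used and cannot be evaded: one must verify that the invariance argument localizes all three $J$-maps onto the same nonzero subspace where their restrictions still obey the given signs, and that no choice of which $z_i$ lie in $L$ allows the construction to close consistently. This is where the interplay between the hyperplane $L$ coming from the criterion and the triple of vectors from~(C) must be handled carefully, ensuring that at least two distinguished vectors can always be placed in $L$ after a suitable linear change within their span.
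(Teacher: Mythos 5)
There is a genuine gap, and it sits exactly at the step you flag as ``I may assume $J^{\mathbb C}_{z_1}x=J^{\mathbb C}_{z_2}x=0$ for two of the distinguished vectors.'' This reduction is not available. The hyperplane $L$ is fixed by the infinite-type assumption and the vectors $z_1,z_2,z_3$ are fixed by condition (C); the only thing dimension counting gives you is that $K\cap L$, $K=\spn_{\mathbb C}\{z_1,z_2,z_3\}$, contains a $2$-dimensional space of \emph{linear combinations} of the $z_i$. You cannot relabel, nor make ``a suitable linear change within their span,'' to force two of the $z_i$ themselves into $L$: condition (C) is a statement about those specific vectors, and a linear combination of operators that pairwise commute or anticommute satisfies no such relation in general, so replacing the $z_i$ by vectors of $K\cap L$ destroys the hypothesis you need. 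There is also a structural red flag you missed: if one \emph{could} arrange $J^{\mathbb C}_{z_1}x=0$ with $x\neq0$, the proof would be over instantly, because (C) makes $J_{z_1}$ non-degenerate, hence $J^{\mathbb C}_{z_1}$ injective. In particular the common kernel $W=\ker(J^{\mathbb C}_{z_1})\cap\ker(J^{\mathbb C}_{z_2})$ that your invariance argument studies is the zero subspace, and the sign hypothesis $\sigma_{12}\sigma_{13}\sigma_{23}=-1$ would never enter. Since that hypothesis is the entire content of (C) beyond non-degeneracy, an argument that never needs it cannot be a correct proof of this theorem; and indeed your proposal never actually derives the contradiction (the last third is phrased as ``the contradiction should emerge'' and ``I expect to produce''), so the key computation is missing as well.

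The repair is to work with the generic vectors of $K\cap L$ rather than trying to replace them by the $z_i$. After relabeling one may write two independent vectors of $K\cap L$ as $z_1+s_1z_3$ and $z_2+s_2z_3$ (if instead some $z_i$ itself lies in $L$, non-degeneracy of $J_{z_i}$ gives an immediate contradiction). The corank-one criterion \eqref{Lcriterion} then yields $x\neq0$ with $J_{z_1}x=-s_1J_{z_3}x$ and $J_{z_2}x=-s_2J_{z_3}x$, and non-degeneracy of $J_{z_1},J_{z_2}$ forces $s_1,s_2\neq0$. Now the relations of (C) are applied by bilinearity of $z\mapsto J_z$:
\begin{equation*}
J_{z_1}J_{z_2}x=\sigma_{13}\,s_1s_2\,J_{z_3}^2x,\qquad
J_{z_2}J_{z_1}x=\sigma_{23}\,s_1s_2\,J_{z_3}^2x,
\end{equation*}
while $J_{z_1}J_{z_2}=\sigma_{12}J_{z_2}J_{z_1}$. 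Since $s_1s_2\neq0$ and $J_{z_3}^2x\neq0$, comparing gives $\sigma_{13}=\sigma_{12}\sigma_{23}$, i.e.\ $\sigma_{12}\sigma_{13}\sigma_{23}=+1$, contradicting (C). This is where the odd sign product genuinely does the work, and it is precisely the interaction of the nonzero coefficients $s_1,s_2$ with the pairwise relations---not an invariant-subspace argument---that produces the contradiction.
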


 \begin{proof}
Let $z_1,z_2,z_3\in\n_{-2}$ be the linearly independent vectors from condition (C) and with a slight abuse of notation define $K={\rm span}_{\mathbb C}\{z_1,z_2,z_3\}\subset\n_{-2}^{\mathbb C}$.

Suppose, that $\n^{\mathbb C}$ is of infinite type and let $L\subset\n_{-2}^{\mathbb C}$ be the codimension one subspace coming from the corank one criterion. By the dimension count, it is easy to see that $\dim_{\mathbb C}(K\cap L)\geq 2$.

Without loss of generality, we can assume that there are $s_1, s_2 \in{\mathbb C}$ such that
$K\cap L\supset\spn_{\mathbb C}\{z_1-s_1z_3, z_2-s_2z_3\}$.
By the definition of $L$, there is a non-zero $x \in{\mathfrak n}_{-1}^{\mathbb C}$ such that
 \[
(J_{z_1}- s_1J_{z_3})(x) = (J_{z_2}- s_2J_{z_3})(x) = 0,
 \]
where we use the notation $J_z$ for the complexification as well. By the non-degeneracy of $J_{z_k}$, $k\in\{1,2,3\}$, it holds that $s_1,s_2\neq0$. By condition (C)
 \[
J_{z_1}J_{z_2}x = s_2J_{z_1}J_{z_3}x=\sigma_{13}s_2J_{z_3}J_{z_1}x=\sigma_{13}s_1s_2J_{z_3}^2x,
 \]
and analogously, $J_{z_2}J_{z_1}x =  \sigma_{23} s_1 s_2J_{z_3}^2 x$.
Thus, it follows that $\sigma_{23} = \sigma_{12} \sigma_{13}$. This contradicts $\sigma_{12}\sigma_{13}\sigma_{23} = -1$ in condition (C), and so ${\mathfrak n}^{\mathbb C}$ is rigid. The rigidity of $\n$ follows from Corollary~\ref{coro:realcomp}.
 \end{proof}

Observe that Theorem~\ref{th:JJJcomp} is applicable in a broader context than just for $M$-type algebras. In particular, it holds when the symmetric bilinear form $\langle\cdot,\cdot\rangle$ degenerates on $\n_{-2}$, as long as the latter possesses a three dimensional subspace satisfying condition (C). Such degenerate cases have been considered before in the literature, see~\cite{BO,CW,CP}.

As a consequence of Theorem~\ref{th:JJJcomp} we have the following results, see also~\cite{Reim}.

\begin{corollary}
Any pseudo $H$-type algebra with $\dim{\mathfrak n}_{-2} \geq 3$ is rigid.
\end{corollary}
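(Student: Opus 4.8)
The plan is to deduce this corollary directly from Theorem~\ref{th:JJJcomp} by verifying that every pseudo $H$-type algebra with $\dim_\real\n_{-2}\geq 3$ satisfies condition (C). Since condition (C) only asks for the existence of three suitable vectors in $\n_{-2}$ and the theorem supplies the rigidity conclusion, the entire task reduces to extracting such a triple from the Clifford relations~\eqref{eq:anticom}.

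First I would fix an orthonormal basis $\{z_1,\dots,z_k\}$ of $\n_{-2}$, whose existence is guaranteed by Definition~\ref{def:Htype}, so that $\la z_i,z_j\ra_{\n_{-2}}=\pm\delta_{ij}$. Because $k=\dim_\real\n_{-2}\geq 3$, I may single out the three automatically linearly independent basis vectors $z_1,z_2,z_3$; these will serve as the candidates in condition (C).

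Next I would read off the required properties from~\eqref{eq:anticom}. Taking $i=j$ gives $J_{z_i}^2=-\la z_i,z_i\ra_{\n_{-2}}\Id_{\n_{-1}}=\pm\Id_{\n_{-1}}$, which is invertible, so each $J_{z_i}$ is non-degenerate as demanded. Taking $i\neq j$ and using orthogonality $\la z_i,z_j\ra_{\n_{-2}}=0$ yields $J_{z_i}J_{z_j}+J_{z_j}J_{z_i}=0$, so the three operators pairwise anti-commute and $\sigma_{ij}=-1$ for all distinct $i,j$. Hence the sign product is $\sigma_{12}\sigma_{13}\sigma_{23}=(-1)^3=-1$, matching exactly the requirement in condition (C).

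There is no genuine obstacle here: the substance lies entirely in Theorem~\ref{th:JJJcomp}, and the present step is merely a check that the Clifford relations force the favorable sign configuration. The one point worth flagging is that the argument employs an orthonormal basis in the possibly indefinite setting, where a diagonal entry $\la z_i,z_i\ra_{\n_{-2}}$ may equal $-1$; this only decides whether a given $J_{z_i}$ is a complex structure or an involution, and in either case $J_{z_i}$ stays invertible, so non-degeneracy is never at risk. With condition (C) confirmed, rigidity follows at once from Theorem~\ref{th:JJJcomp}.
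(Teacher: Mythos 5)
Your proof is correct and is exactly the argument the paper intends: the paper states the corollary as an immediate consequence of Theorem~\ref{th:JJJcomp}, having already remarked that condition (C) ``is verified in the important case of the pseudo $H$-type algebras,'' and your verification (orthonormality plus the Clifford relations~\eqref{eq:anticom} give invertible, pairwise anti-commuting $J_{z_1},J_{z_2},J_{z_3}$ with $\sigma_{12}\sigma_{13}\sigma_{23}=(-1)^3=-1$) is precisely the omitted check. Nothing is missing.
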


\begin{corollary}\label{coro:comeig}
Let $\n$ be a pseudo $J$-type algebra, and let $A$ be the subalgebra of ${\rm End}(\n_{-1})$ generated by the set $\{J_zJ_w\colon z,w\in\n_{-2}\}$. If $\n$ is of infinite type, then $A$ has a common eigenvector over ${\mathbb C}$.
\end{corollary}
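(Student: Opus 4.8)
The plan is to feed the hypothesis of infinite type into the corank one criterion and then let the invertibility built into the pseudo $J$-type condition do all the work; in fact I expect the corank-one vector itself to be the desired common eigenvector, so that no auxiliary invariant subspace or solvability argument is needed.

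First I would pass to the complexification. By Corollary~\ref{coro:realcomp} the algebra $\n^{\comp}$ is again of infinite type, so the corank one criterion~\eqref{Lcriterion} supplies a codimension-one subspace $L\subset\n_{-2}^{\comp}$ and a nonzero $x\in\n_{-1}^{\comp}$ with $J_zx=0$ for all $z\in L$. Since $L$ has codimension one, the linear map $z\mapsto J_zx$ has rank at most one, i.e.\ all the vectors $J_zx$ are proportional to a single vector $v$. Writing $\{z_1,\dots,z_k\}$ for the orthonormal basis of Definition~\ref{def:Jtype}, complexified, this reads $J_{z_i}x=\phi_i v$ for scalars $\phi_i\in\comp$.

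The key observation is that the pseudo $J$-type condition forces every $\phi_i$ to be nonzero. Indeed $J_{z_i}^2=\pm\Id_{\n_{-1}}$ makes each $J_{z_i}$ invertible, so $J_{z_i}x\neq0$, whence $\phi_i\neq0$ and $v\neq0$. Consequently $v=\phi_i^{-1}J_{z_i}x$ for every $i$, and applying $J_{z_i}$ once more gives $J_{z_i}v=\phi_i^{-1}J_{z_i}^2x$, a scalar multiple of $x$. This closes the computation: for each pair of basis vectors, $J_{z_i}J_{z_j}x=\phi_j\,J_{z_i}v$ is a scalar multiple of $x$, and by bilinearity $J_zJ_wx$ is a scalar multiple of $x$ for all $z,w\in\n_{-2}^{\comp}$. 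Thus every generator of $A$ has $x$ as an eigenvector, and since the operators admitting a fixed eigenvector form a subalgebra, $x$ is a common eigenvector of $A$ over $\comp$.

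I expect the only delicate point to be the step asserting $\phi_i\neq0$, which is exactly where the pseudo $J$-type hypothesis is indispensable: for a general $M$-type algebra some $J_{z_i}$ may be degenerate, one of the coefficients $\phi_i$ may vanish, and then the clean formula for $J_{z_i}v$ as a multiple of $x$ is lost, breaking the argument. Recording that the defining relation $J_{z_i}^2=\pm\Id$ guarantees non-degeneracy is therefore the crux; once it is in place, the corank-one vector produced by~\eqref{Lcriterion} is handed to us as the common eigenvector with no further effort.
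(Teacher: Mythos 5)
Your proof is correct and takes essentially the same route as the paper: complexify, invoke the corank one criterion~\eqref{Lcriterion}, note that $z\mapsto J_zx$ has rank one so that all $J_{z_i}x$ are nonzero multiples of a single vector (your $v$ is, up to scale, the paper's $J_{z_k}x$), use $J_{z_i}^2=\pm\Id_{\n_{-1}}$ to invert and conclude that each generator $J_zJ_w$ acts on $x$ by a scalar, hence so does every element of $A$. The only cosmetic difference is that the paper encodes the rank-one step as $J_{z_i}-s_iJ_{z_k}\in J_L$ with $s_i\neq0$ (citing the argument of Theorem~\ref{th:JJJcomp}), whereas you derive the nonvanishing of the coefficients directly from the invertibility furnished by the pseudo $J$-type condition.
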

\begin{proof}
Choose an orthonormal basis $\{z_1,\dotsc,z_m\}$ of $\n_{-2}$. As in the proof of Theorem~\ref{th:JJJcomp}, by complexifying, we know there exist non-zero complex numbers $s_1,\dotsc,s_{m-1}\in{\mathbb C}$ such that (perhaps after re-enumeration)
\begin{equation}\label{eq:JinL}
J_{z_i}-s_iJ_{z_m}\in J_L,
\end{equation}
where $L$ is the subspace given in the corank one condition. If $x\in\n_{-1}^{\mathbb C}$ is the non-zero vector corresponding to $L$, then equation~\eqref{eq:JinL} implies that there are constants $c_{ij}\in{\mathbb C}$, $i,j\in\{1,\dotsc,m\}$, such that
$J_{z_i}J_{z_j}x=c_{ij}x$.
Consequently, since any $I\in A$ can be written as a polynomial in the variables $J_{z_i}J_{z_j}$, say $I=P(\dotsc,J_{z_i}J_{z_j},\dotsc)$, we conclude that
$Ix=P(\dotsc,J_{z_i}J_{z_j},\dotsc)x=P(\dotsc,c_{ij},\dotsc)x.$ \qedhere
\end{proof}

\begin{remark}
{\rm Let $z_1,z_2,z_3\in\n_{-2}$ be linearly independent.
Corollary~\ref{coro:comeig} implies that if $[J_{z_i}, J_{z_j}]$, $1\leq i<j\leq3$,
have no common eigenvector over ${\mathbb C}$, then ${\mathfrak n}$ is rigid.}
\end{remark}


\section{Rigidity and $J^2$-condition}\label{sec:J2}

The main goal of this section is to characterize an analogue of the so-called $J^2$-condition for pseudo $J$- and $H$-type algebras, studied in~\cite{Ci2,CDKR}. Although in the classical situation this condition has deep geometric implications, we only focus here on those  algebras that admit this algebraic property.

\begin{definition}\label{def:J2}
A pseudo $J$-type algebra $\n=\n_{-2}\oplus\n_{-1}$ satisfies the $J^2$-condition if for every
$x\in\n_{-1}$, $\langle x,x\rangle_{\n_{-1}}\neq 0$, and for every orthogonal pair $z,z'\in\n_{-2}$, there exists $z''\in{\n_{-2}}$ satisfying
\begin{equation}\label{eq:J2}
J_zJ_{z'}x=J_{z''}x.
\end{equation}
\end{definition}

Equation~\eqref{eq:J2} implies that for any given $x\in\n_{-1}$, $\langle x,x\rangle_{\n_{-1}}\neq 0$, 
the space
 \begin{equation}\label{Ax}
A_x=\R x\oplus J_{\n_{-2}}x=\{\alpha x+J_{z'}x\mid \alpha\in\R, z'\in\n_{-2}\}
 \end{equation}
is $J_z$-invariant for every $z\in\n_{-2}$ (note that
$\R x\cap J_{\n_{-2}}x=0$ if $\|x\|^2_{\n_{-1}}\neq0$). 

The converse statement is true for pseudo $H$-type algebras:
if $J_zA_x\subset A_x$ for any non-null $x\in\n_{-1}$ and any $z\in{\n_{-2}}$,
then the pseudo $H$-type algebra $\n$ satisfies the $J^2$-condition.
This implication holds thanks to the identity
 \begin{equation}\label{JJxx}
\langle J_{z}x,J_{z'}x\rangle_{\mathfrak{n}_{-1}}=
\langle z,z'\rangle_{\mathfrak{n}_{-2}}\langle x,x\rangle_{\mathfrak{n}_{-1}}
 \end{equation}
that will be used in the sequel.
 
If $\langle x,x\rangle_{\n_{-1}}\neq 0$, then $A_x$ is a non-degenerate subspace of
$(\n_{-1},\langle\cdot,\cdot\rangle_{n_{-1}})$.
In the case when $x$ is null, the restriction of $\langle\cdot,\cdot\rangle_{n_{-1}}$ to $A_x$ 
is degenerate. We define the general $J^2$-condition by omitting the requirement $\langle x,x\rangle_{\n_{-1}}\neq 0$.

\begin{definition}\label{def:J2general}
A pseudo $J$-type algebra $\n=\n_{-2}\oplus\n_{-1}$ satisfies the general $J^2$-condition if for any
$x\in\n_{-1}$, and for all orthogonal pairs $z,z'\in\n_{-2}$ there exists $z''\in{\n_{-2}}$ such that  equation~\eqref{eq:J2} holds.
\end{definition}

Observe that for $J$-type algebras (more generally, when $\langle\cdot,\cdot\rangle$
is positive definite on $\n_{-1}$) the general $J^2$ condition is equivalent to the usual $J^2$
condition. Of course, the general $J^2$ condition implies the usual one.
Note also that the (general) $J^2$-condition is trivially fulfilled for $\dim(\n_{-2})=0,1$.


 \begin{lemma}\label{lem:dimnm2e2}
The $J^2$-condition is never satisfied for $\dim\n_{-2}=2$.
 \end{lemma}

 \begin{proof}
Assuming the opposite we get two linear operators $J_i=J_{z_i}\in\op{GL}(\n_{-1})$ with $J_i^2=\epsilon_i=\pm1$, $i=1,2$,
for an orthonormal basis $z_1,z_2\in\n_{-2}$.

Note that if $z=s_1z_1+s_2z_2\neq0$, $s_i\in\mathbb{R}$, then $J_z\neq\lambda_z\cdot{\bf1}$
because $\lambda_z=0$ contradicts non-degeneracy of
$\langle\cdot,\cdot\rangle_{n_{-2}}$, while for $\lambda_z\neq0$ the operator $J_z$ is not skew-symmetric with respect to $\langle\cdot,\cdot\rangle_{n_{-1}}$.

Now the $J^2$-condition implies that for any non-null $x\in\n_{-1}$ for some scalar functions
$\alpha_x,\beta_x$ we have the equality: $J_1J_2x=(\alpha_xJ_1+\beta_xJ_2)x$.
If $\alpha_x\equiv0$ then $J_1\equiv\beta\cdot{\bf1}$, and if $\beta_x\equiv0$ then $J_1\equiv\alpha\cdot{\bf1}$.
Consequently, $\alpha_x\beta_x\not\equiv0$. Thus  application of the operator $J_1$ to the above equality from the left
yields
 \begin{equation}\label{J1J2}
J_1x=(p_x{\bf1}+q_xJ_2)x
 \end{equation}
for some (perhaps rational) scalar functions $p_x,q_x$. Since for generic $x\in\n_{-1}$
the vectors $x$ and $J_2x$ are linearly independent, the linearity of the operators $J_1,{\bf1},J_2$
implies that $p_x=p=\op{const}$, $q_x=q=\op{const}$ in formula \eqref{J1J2}, i.e.\
$J_1=p\,{\bf1}+qJ_2$.

Squaring this latter identity yields $\epsilon_1{\bf1}=(p^2+\epsilon_2q^2){\bf1}+2pqJ_2$.
Then $pq=0$, but since $q\neq0$ we get $p=0$, $q=\pm1$, whence $J_2=\pm J_1$ and $J_z=0$ for $z=z_1\mp z_2$,
which is a contradiction.
 \end{proof}

 \begin{theorem}\label{th:Boris}
A pseudo $J$-type algebra $\n=\n_{-2}\oplus\n_{-1}$, $\dim(\n_{-2})\geq 3$, satisfying
the general $J^2$-condition is rigid.
 \end{theorem}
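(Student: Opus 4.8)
The plan is to leverage the rigidity criterion established in Theorem~\ref{th:JJJcomp}, which says that an $M$-type algebra with $\dim_{\mathbb R}\n_{-2}\geq 3$ is rigid provided condition (C) holds. So the entire task reduces to showing that the general $J^2$-condition forces the existence of three linearly independent vectors $z_1,z_2,z_3\in\n_{-2}$ with non-degenerate $J$-maps satisfying $J_{z_i}J_{z_j}=\sigma_{ij}J_{z_j}J_{z_i}$ and $\sigma_{12}\sigma_{13}\sigma_{23}=-1$. First I would fix an orthonormal basis $\{z_1,\ldots,z_k\}$ of $\n_{-2}$ as in Definition~\ref{def:Jtype}, so that $J_{z_i}^2=\pm\Id_{\n_{-1}}$; in particular each $J_{z_i}$ is automatically non-degenerate, which disposes of the non-degeneracy requirement in (C) for free.

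The heart of the matter is to extract the commutation/anticommutation relations from the general $J^2$-condition. I would apply the condition to an orthogonal pair $z_i,z_j$ ($i\neq j$): for each $x\in\n_{-1}$ there is $z''=z''(x)\in\n_{-2}$ with $J_{z_i}J_{z_j}x=J_{z''}x$. The key algebraic move is to use that $J_{z_i}^2=\epsilon_i\Id$ with $\epsilon_i\in\{\pm1\}$, so that $J_{z_i}J_{z_j}$ is, up to sign, an involution or complex structure. The plan is to show that the map $z''(x)$ is in fact linear in $x$ and independent of $x$, i.e.\ $J_{z_i}J_{z_j}=J_{w}$ for a single fixed $w\in\n_{-2}$ (possibly on the complexification, or after a careful argument in the pseudo $J$-type setting); the orthogonality $\langle z_i,z_j\rangle=0$ makes $w$ orthogonal to the plane $\spn\{z_i,z_j\}$. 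From $J_{z_i}J_{z_j}=J_w$ one then computes $J_{z_j}J_{z_i}=(J_{z_i}J_{z_j})^{-1}\cdot J_{z_i}^2 J_{z_j}^2=\epsilon_i\epsilon_j J_w^{-1}$, and comparing with $J_w^2=\pm\Id$ yields $J_{z_i}J_{z_j}=\pm J_{z_j}J_{z_i}$, i.e.\ any two distinct basis vectors either commute or anticommute, giving well-defined signs $\sigma_{ij}$.

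Once the pairwise relations $J_{z_i}J_{z_j}=\sigma_{ij}J_{z_j}J_{z_i}$ are established for the orthonormal basis, I would argue that the triple product sign $\sigma_{12}\sigma_{13}\sigma_{23}$ cannot equal $+1$ for all choices of a distinguished triple, which is exactly condition (C). The cleanest route is a parity/cocycle argument: writing $J_{z_i}J_{z_j}=\eta(z_i,z_j)\,J_{z''_{ij}}$ with $\eta$ a sign and tracking associativity of the triple product $J_{z_1}J_{z_2}J_{z_3}$, one obtains a constraint of the form $\sigma_{12}\sigma_{13}\sigma_{23}=-1$ (this is the familiar sign that appears in Clifford-type relations, where three anticommuting generators give $(-1)^3=-1$ and a mixed commuting/anticommuting triple must produce the same net sign because of the closure $J_{z_i}J_{z_j}=J_w$). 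With condition (C) verified, rigidity of $\n$ is immediate from Theorem~\ref{th:JJJcomp}.

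The main obstacle I expect is the step asserting that $J_{z_i}J_{z_j}=J_w$ for a \emph{single} vector $w$ rather than a pointwise-varying $z''(x)$. The definition of the general $J^2$-condition only guarantees, a priori, a $z''$ depending on $x$, and $J_{z''}$ need not be linear as a function of the value $J_{z_i}J_{z_j}x$ on a degenerate orbit. Here the identity $\langle J_z x,J_{z'}x\rangle_{\n_{-1}}=\langle z,z'\rangle_{\n_{-2}}\langle x,x\rangle_{\n_{-1}}$ quoted in the excerpt is the natural tool to pin down $z''$ in terms of inner products for non-null $x$, and then a density/continuity argument should extend the conclusion to null vectors; in the pseudo $J$-type case (where that polarization identity may fail), I anticipate needing to complexify and work with $J^{\mathbb C}$ as in the proof of Theorem~\ref{th:JJJcomp}, invoking Corollary~\ref{coro:realcomp} at the end. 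Establishing the well-definedness and linearity of $w$ is where the genuine work lies; the passage from (C) to rigidity is then purely a citation of Theorem~\ref{th:JJJcomp}.
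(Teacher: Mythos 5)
Your overall strategy --- reduce to condition (C) and cite Theorem~\ref{th:JJJcomp} --- is viable, but the two load-bearing steps of your reduction are missing, and you have flagged the first one yourself. First, the upgrade from the pointwise statement ``for each $x$ there exists $z''(x)$ with $J_{z_i}J_{z_j}x=J_{z''(x)}x$'' to an operator identity $J_{z_i}J_{z_j}=J_w$ with $w$ independent of $x$ is precisely the hard content, and the tool you propose for it, the identity $\langle J_zx,J_{z'}x\rangle_{\n_{-1}}=\langle z,z'\rangle_{\n_{-2}}\langle x,x\rangle_{\n_{-1}}$, is available only for pseudo $H$-type algebras; the paper states explicitly that pseudo $J$-type algebras which are not of pseudo $H$-type do not possess this property, so your pin-down-then-extend-by-density plan has no starting point in the generality of the theorem. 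Second, even granting $J_{z_i}J_{z_j}=J_w$, your derivation of $\sigma_{ij}\in\{\pm1\}$ uses $J_w^2=\pm\Id_{\n_{-1}}$, but Definition~\ref{def:Jtype} guarantees squares $\pm\Id_{\n_{-1}}$ only for the vectors of one chosen orthonormal basis, not for an arbitrary $w\in\n_{-2}$; and the final constraint $\sigma_{12}\sigma_{13}\sigma_{23}=-1$ is asserted by analogy with Clifford relations (``parity/cocycle''), not derived. As written, a pseudo $J$-type algebra with, say, three commuting $J$-maps satisfying your intermediate conclusions is not excluded by any argument you give.

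Ironically, your route can be completed by a short observation that makes both missing steps unnecessary: for every $x\in\n_{-1}$ and every orthogonal pair $z\perp z'$, the general $J^2$-condition together with Definition~\ref{def:Jrc} gives
$\langle J_zJ_{z'}x,x\rangle_{\n_{-1}}=\langle J_{z''(x)}x,x\rangle_{\n_{-1}}=\langle z''(x),[x,x]\rangle_{\n_{-2}}=0$,
where $z''(x)$ is allowed to depend on $x$. Polarizing this quadratic identity shows that $J_zJ_{z'}$ is skew-symmetric with respect to $\langle.\,,.\rangle_{\n_{-1}}$; since each $J$-map is itself skew-symmetric, the adjoint of $J_zJ_{z'}$ is $J_{z'}J_z$, whence $J_zJ_{z'}=-J_{z'}J_z$ for every orthogonal pair. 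Thus any three vectors of the orthonormal basis of Definition~\ref{def:Jtype} satisfy condition (C) with all $\sigma_{ij}=-1$ (non-degeneracy of $J_{z_i}$ coming from $J_{z_i}^2=\pm\Id_{\n_{-1}}$), and Theorem~\ref{th:JJJcomp} finishes the proof. Note also that this differs substantially from the paper's own argument, which never establishes condition (C): the paper assumes infinite type, invokes Corollary~\ref{coro:comeig} to produce a common eigenvector $x$ of all products $J_{z_i}J_{z_j}$ over $\mathbb{C}$, uses the general $J^2$-condition (crucially at this $x$, which turns out to be null) to promote $x$ to an eigenvector of every $J_z$, derives the matrix normal form \eqref{mat1} for all $J_z$, and obtains a contradiction by comparing $J_zJ_{z'}$ with the matrices $J_{z''}$, concluding with Corollary~\ref{coro:realcomp}.
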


\begin{proof} We will work over the field $\mathbb C$. If necessary we use the complexification and finish the proof by applying Corollary~\ref{coro:realcomp}.

By contradiction, let us suppose that a pseudo $J$-type algebra $\n$ is of infinite type. Corollary~\ref{coro:comeig} implies that there exists a common eigenvector
$x\in\n_{-1}$ for the algebra $A$ generated by the operators $J_{z_i}J_{z_j}$, where $\{z_i\}_{i=1}^m$ is an orthonormal basis for $\n_{-2}$. Thus, we obtain
$J_{z_i}J_{z_j}x=\mu_{z_i,z_j}x$ for some non-vanishing $\mu_{z_i,z_j}\in\mathbb{C}$.

We claim that the same $x$ is also an eigenvector for all $J_z$, $z\in\n_{-2}$. Indeed, since the pseudo $J$-type algebra satisfies Definition~\ref{def:J2general}, for any pair $z_i, z_j$ of vectors from the orthonormal basis for $\n_{-2}$, there is $z''_{ij}\in\n_{-2}$ such that
$J_{z_i}J_{z_j}x=J_{z''_{ij}}x$. We get $J_{z_j}x=\epsilon_iJ_{z_i}J_{z''_{ij}}x=
\pm\mu_{z_i,z''_{ij}}x$ by $J_{z_i}^2=\epsilon_i=\pm{\rm Id}_{n_{-1}}$. Since $j$ is an arbitrary index and $\{z_j\}_{i=1}^m$ is a basis for $\n_{-2}$, the claim follows.

Let us fix the eigenvector $x$. Then  $J_z x=\lambda_z x$ for a non-zero linear function $\lambda\colon \n_{-2}\to\mathbb{C}$.
The definition of pseudo  $J$-type algebras leads to
 \begin{equation}\label{Jj}
\langle z,[x,y]\rangle_{\n_{-2}}=\langle J_zx,y\rangle_{\n_{-1}}=\lambda_z\langle x,y\rangle_{\n_{-1}}
=-\langle x,J_zy\rangle_{\n_{-1}}
 \end{equation}
 for all $y\in\n_{-1}$, $z\in\n_{-2}$.
This implies that $\langle x,x\rangle_{n_{-1}}=0$, and thus $[x,\Pi]=0$ for the co-dimension 1 subspace $\Pi\subset\n_{-1}$ orthogonal to $x$. We can choose $y\not\in\Pi$ such that $\langle x,y\rangle_{\n_{-1}}=1$.
Thus, there exists a basis $e_1=y,e_2=x,e_3,\dots,e_n$, $n=\dim\n_{-1}$, $e_i\in\Pi$ for $i>1$,
such that $\langle e_1,e_2\rangle_{\n_{-1}}=1$, $\langle e_2,e_i\rangle_{\n_{-1}}=0$ for $i>1$.

From~\eqref{Jj} with $x=e_2$ and $y=e_1$ we obtain $\lambda_z=-\langle e_2,J_ze_1\rangle_{\n_{-1}}$,
whence $J_ze_1=-\lambda_ze_1\!\!\mod\Pi$.
Similarly, substituting $y=e_i$ for $i>2$ in \eqref{Jj} we obtain $\langle e_2,J_ze_i\rangle_{\n_{-1}}=0$,
whence $J_ze_i=0\!\!\mod\Pi$ for $i>2$.
Finally, since $e_2=x$ is an eigenvector, we obtain $J_ze_2=\lambda_z e_2$.
Hence, the matrix of the operator $J_z\in\op{End}(\n_{-1})$ in the chosen basis $e_1,e_2,\dots,e_n$
has the form
 \begin{equation}\label{mat1}
J_z=\begin{pmatrix}
-\lambda_z & 0 & 0 & \dots & 0\\
* & \lambda_z & * & \dots & *\\
* & 0 & * & \dots & *\\
* & \vdots & \ddots & \vdots & *\\
* & 0 & * & \dots & *
\end{pmatrix}.
\end{equation}
Taking $z'$ orthogonal to $z$ and multiplying~\eqref{mat1} by $J_{z'}$ from the right, we arrive at
 $$
J_zJ_{z'}=\begin{pmatrix}
\lambda_z\lambda_{z'} & 0 & 0 & \dots & 0\\
* & \lambda_z\lambda_{z'} & * & \dots & *\\
* & 0 & * & \dots & *\\
* & \vdots & \ddots & \vdots & *\\
* & 0 & * & \dots & *
\end{pmatrix}
 $$
that differs from any matrix $J_{z''}$ unless $\lambda_z\lambda_{z'}=\lambda_{z''}=0$.
Since for a generic orthogonal pair $z,z'\in\n_{-2}$, it holds that $\lambda_z\neq 0$, $\lambda_{z'}\neq0$, we obtain a contradiction.
This proves the required rigidity of $\n$.
\end{proof}


\section{Generic rigidity of 2-step nilpotent algebras}\label{sec:add}

Let us discuss the general rigidity problem for graded 2-step nilpotent algebras $\n=\n_{-2}\oplus\n_{-1}$.
Notice that if $0\le \dim\n_{-2}\le2$, then the algebra $\n$ is of infinite type, see Section~\ref{sec:rigidity}. So we study the algebras with $\dim\n_{-2}>2$.

In this section we work with both real and complexified cases, so we omit specification of the field
and signature for the metric $\langle\cdot,\cdot\rangle_{\n_{-1}}$ and simply write $\mathfrak{so}(n)$ below (the conclusion does not depend on this signature).

Denote by $N(m,n)$ the space of 2-step graded nilpotent
Lie algebras with bi-dimensions $(m,n)$, where $m=\dim\n_{-2}$, $n=\dim\n_{-1}$, $0\le m\le\binom{n}2$.
This space is an algebraic manifold of dimension $m d$, $d=\binom{n}2-m$, with the isomorphism to
the Grassmanian $\op{Gr}_d(\Lambda^2\n_{-1})$ given by associating
$Z=\ker(\Lambda^2\n_{-1}\to\n_{-2})$ to the bracket on $\n$. Reciprocally, $\n$ is
restored by letting $\n_{-2}=\Lambda^2\n_{-1}/Z$. 
In particular, the notion of a generic
Lie algebra structure $\n\in N(m,n)$ is given by the notion of a Zariski generic point
$Z\in\op{Gr}_d(\Lambda^2\n_{-1})$.

 \begin{theorem}\label{th:3}
A generic algebra $\n\in N(m,n)$ is rigid for $m\ge3$, $n\ge3$.
 \end{theorem}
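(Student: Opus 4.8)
The plan is to show that the rigidity criterion fails for a generic choice of $Z \in \op{Gr}_d(\Lambda^2\n_{-1})$ by exhibiting rigidity on a single well-chosen algebra and then invoking the Zariski-open nature of the rigid locus. Recall from \eqref{Lcriterion} that over $\mathbb{C}$ the algebra $\n$ is of infinite type precisely when there is a codimension-one subspace $L\subset\n_{-2}^{\mathbb C}$ and a nonzero $x\in\n_{-1}^{\mathbb C}$ with $J_z x = 0$ for all $z\in L$; equivalently, there is a line $\ell = \langle z_0\rangle$ in $\n_{-2}^{\mathbb C}$ (a complement to $L$) such that the pencil $\{J_z : z\in\n_{-2}^{\mathbb C}\}$, restricted to the hyperplane $L$, shares a common kernel vector. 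The key reformulation I want is that being of \emph{infinite} type is a Zariski-closed condition on $Z$: the simultaneous vanishing $J_z x = 0$ for a $(m-1)$-dimensional space of $z$'s is cut out by polynomial (in fact bilinear) equations in the structure constants encoded by $Z$ and in the coordinates of $x$ and of $L$. Projecting away the auxiliary variables $x$ and $L$ via elimination theory (the image of a projective morphism is closed) shows that the infinite-type locus $\mathcal{I}\subset\op{Gr}_d(\Lambda^2\n_{-1})$ is Zariski closed, so its complement, the rigid locus, is Zariski open.

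\emph{Second I would} establish that this rigid locus is nonempty, which by openness forces it to be dense, giving the genericity claim. For this it suffices to produce \emph{one} algebra in $N(m,n)$ with $m\ge 3$, $n\ge 3$ that is rigid. The natural candidates are the pseudo $H$-type algebras: by the first Corollary following Theorem~\ref{th:JJJcomp}, any pseudo $H$-type algebra with $\dim\n_{-2}\ge 3$ is rigid. One must only check that for the given pair $(m,n)$ with $m\ge 3$ there exists a point of $N(m,n)$ carrying such a structure, or more robustly, any algebra satisfying condition (C). Concretely, I would pick three operators $J_{z_1},J_{z_2},J_{z_3}$ on $\mathbb{C}^n$ (or $\mathbb{R}^n$) that are nondegenerate and satisfy the sign condition $\sigma_{12}\sigma_{13}\sigma_{23}=-1$ of condition (C) — for instance the quaternionic triple $J_1,J_2,J_3=J_1J_2$ with $J_iJ_j=-J_jJ_i$ for $i\neq j$ — and then fill out the remaining $m-3$ directions of $\n_{-2}$ arbitrarily (even by zero maps, which does not affect the argument since condition (C) only requires three vectors). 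The resulting structure constant array defines a point $Z_0\in\op{Gr}_d(\Lambda^2\n_{-1})$, and by Theorem~\ref{th:JJJcomp} the associated algebra is rigid.

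\emph{The main obstacle} I anticipate is the dimension bookkeeping needed to guarantee that such a point $Z_0$ actually lies in $N(m,n)$ for every admissible $(m,n)$ — that is, that the three (or more) chosen skew forms $\omega_i(x,y)=\langle J_{z_i}x,y\rangle$ are linearly independent (so that $\dim\n_{-2}$ is really $m$, not smaller) and that together they span a subspace whose annihilator $Z$ has the correct codimension $d=\binom{n}2-m$. When $n$ is small relative to $m$ one cannot find arbitrarily many independent nondegenerate skew forms, but the constraint $m\le\binom{n}2$ built into the definition of $N(m,n)$, together with the fact that only three special directions are constrained by condition (C), should leave enough room; the remaining $m-3$ forms can be taken to be generic linear combinations completing a basis of an $m$-dimensional subspace of $\Lambda^2\n_{-1}$ containing the quaternionic triple. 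I would verify this independence by a rank computation on the span of $\{\omega_i\}$ inside $\Lambda^2(\mathbb{C}^n)$.

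\emph{Finally}, assembling the two halves: the rigid locus is Zariski open (from the closedness of $\mathcal{I}$) and nonempty (from the explicit $Z_0$), hence Zariski dense. Therefore a generic $\n\in N(m,n)$ is rigid whenever $m\ge 3$ and $n\ge 3$, as claimed. By Corollary~\ref{coro:realcomp} the same conclusion transfers between the real and complexified settings, so no separate treatment of the two fields is required.
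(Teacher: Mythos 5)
Your reduction in the first and last paragraphs is sound and coincides with the paper's own strategy: the infinite-type locus is the image in $\op{Gr}_d(\Lambda^2\n_{-1})$ of a closed incidence variety (conditions bilinear in the structure constants, the hyperplane $L$, and $[x]\in\mathbb{P}(\n_{-1}^{\mathbb C})$), hence Zariski closed by properness of the projective factors, so a single rigid witness in each bi-dimension $(m,n)$ gives generic rigidity. The paper states exactly this reduction just before its proof.

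The genuine gap is in your construction of the witness: the algebras you propose do not exist for most bi-dimensions covered by the theorem. Every $J_z$ is skew-symmetric with respect to $\langle.\,,.\rangle_{\n_{-1}}$, so for odd $n$ one has $\det J_z=\det(-J_z^{t})=(-1)^n\det J_z=0$: \emph{every} $J_z$ is degenerate. Hence condition (C), which requires three nondegenerate $J_{z_i}$, can never hold for $n=3,5,7,\dots$, and no pseudo $H$-type algebra lies in $N(m,n)$ for odd $n$ either. Even for even $n$, your quaternionic triple $J_1,J_2,J_3=J_1J_2$ forces $n\equiv0\ (\mathrm{mod}\ 4)$, and a pseudo $H$-type structure forces $n$ to be a multiple of the minimal admissible module dimension of $\Cl(\n_{-2},\langle.\,,.\rangle_{\n_{-2}})$, which grows like $2^{m/2}$ (Table~\ref{t:dim}); this scarcity is precisely the content of the paper's later theorem that generic rigid algebras are \emph{not} of pseudo $H$-type, with $(2,4)$ and $(3,4)$ the only exceptional bi-dimensions. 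Concretely, your argument produces no witness for $(m,n)=(3,5)$, the first case beyond $(3,4)$, nor for any $(m,n)$ with $n$ odd or $n\equiv2\ (\mathrm{mod}\ 4)$. The paper avoids this trap with witnesses that never require invertibility of individual operators: either a perturbation count (the rank-$\le1$ locus is cut out by $(n-1)(m-1)>n$ quadrics in $x$ whose coefficients can be freely perturbed, with an explicit real example written out for $(3,5)$), or, in its second proof, a generic triple $J_1,J_2,J_3\in\so(n)$ with the Breuillard--Gelander property that any $2$-dimensional subspace of their span generates the Lie algebra $\so(n)$, so a common kernel vector of $L^\dagger=\spn\{z_1,z_2,z_3\}\cap L$ would be annihilated by all of $\so(n)$ and hence vanish. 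To repair your proof you would have to replace the quaternionic/Clifford witness by one of these constructions (or something equally robust) on the full range $m\ge3$, $n\ge3$.
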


In other words, for a generic Lie bracket on $\n$ with the bi-dimensions in the range $m,n\ge3$
the automorphism group of the Carnot structure $(\exp\n,\n_{-1})$ is a Lie group.
Before we give a proof of this, let us notice that several authors have studied automorphisms of generic
2-step Carnot structures. In particular, let us mention the result by P.\,Pansu~\cite[Proposition 13.1]{Pa},
asserting that in general, the automorphism group is generated by translations $\op{ad}_x$, $x\in\n_{-1}$,
and the standard homothety, namely the action by the grading element $e\in\n_0\subset\hat\n$,
provided that $n\in2\mathbb{Z}$, $n\ge10$, $3\le m<2n-3$.
The restrictions on bi-dimensions were relaxed by P.\,Eberlein~\cite[Proposition 3.4.3]{E2},
where his assumption is that $\bar{d}=\min\{m,\binom{n}2-m\}\ge3$ excluding the cases $n\le 6$
for $\bar{d}=3$. 
See also~\cite{KT,Riehm,Saal}.

When the stability subgroup of the automorphism group is only scaling due to the grading
element, the positive part of the Tanaka prolongation vanishes.
Indeed, if $\n_0=\langle e\rangle$, where $e$ is the grading element and
$v\in\n_1$ is a non-zero element, then we can choose a codimension 1
subspace $\Pi\subset\n_{-1}$ and a vector $x\in\n_{-1}\setminus\Pi$ such
that $[x,v]=e$, $[\Pi,v]=0$. Then $[[x,y],v]=-y$,
$[[[x,y_1],y_2],v]=-[y_1,y_2]=0$ and $[[[x,y],x],v]=-3[x,y]=0$ for all
$y,y_1,y_2\in\Pi$, so that $\n_{-2}=[\n_{-1},\n_{-1}]=0$. 
This contradiction yields the claim of our theorem. 

However, in order to overcome the restrictions assumed by Pansu and Eberlein, we will give the
proof valid for all pairs $(n,m)$ with the range specified. Outside this range all the algebras $\n\in N(m,n)$
are of infinite type.

Notice that absence of rank 1 elements in the family $\{J_z:0\neq z\in\n_{-2}\}\subset \op{End}(\n_{-1})$
is a Zariski open condition, so proving there exists one rigid Lie algebra structure on $\n$ in
bi-dimensions $(m,n)$ implies the same for a generic one.

 \begin{proof}
We will give two proofs of Theorem~\ref{th:3}. First, let us remind that we work in the complexification.
Using criterion~\eqref{Lcriterion}, we have to show that generically,
for a basis $z_1\,\dots,z_m$ of $\n_{-2}$, the vectors $J_{z_1}x,\dots,J_{z_m}x$
span the space of dimension greater than $1$ for all non-zero vectors $x\in\n_{-1}$.
In other words, choosing a basis in $\n_{-1}$, the $n\times m$ matrix $M=[J_{z_1}x|\dots|J_{z_m}x]$ has rank less than or equal to $1$ only if $x=0$. In the case $x=0$ the rank is indeed 0.

The first proof is constructive. The condition $\rm{rank}\,(M)\le1$ means that
all $2\times 2$ minors vanish. Take independent minors $M_{1,i}^{1,j}$ for $1<i\le n$, $1<j\le m$.
The entries (linear in $x$) are at our disposal since we are free to perturb the operators $J_{z_j}$,
so we get $(n-1)(m-1)>n$ quadratic conditions on $x\in\n_{-1}$ whose common solution is generically
only zero, thus proving our claim.

For example, in the case $(m,n)=(3,5)$ we can have the following explicit matrices
giving the structure of $\n$:
 \begin{equation*}
\!\!J_{z_1}=\begin{bmatrix}
0 & 1 & 0 & 0 & 1\\
-1& 0 & 0 & 0 & 0\\
0 & 0 & 0 & 1 & 0\\
0 & 0 &-1 & 0 & 0\\
-1& 0 & 0 & 0 & 0\end{bmatrix},
J_{z_2}=\begin{bmatrix}
0 & 0 & 0 & 0 & 0\\
0 & 0 & 1 & 0 & 1\\
0 &-1 & 0 & 0 & 0\\
0 & 0 & 0 & 0 & 0\\
0 &-1 & 0 & 0 & 0\end{bmatrix},
J_{z_3}=\begin{bmatrix}
0 & 0 & 0 & 1 & 0\\
0 & 0 & 0 & 0 & 1\\
0 & 0 & 0 & 0 & 0\\
-1& 0 & 0 & 0 & 1\\
0 &-1 & 0 &-1 & 0\end{bmatrix}
 \end{equation*}
Then for $0\neq x=(x_1,x_2,x_3,x_4,x_5)^t\in\n_{-1}$ the $5\times3$ matrix
 $$
[J_{z_1}x|J_{z_2}x|J_{z_3}x]=
\begin{bmatrix}
x_2+x_5 & 0 & x_4\\
-x_1 & x_3+x_5 & x_5\\
x_4 & -x_2 & 0 \\
-x_3 & 0 & x_5-x_1\\
-x_1 & -x_2 & -x_2-x_4\end{bmatrix}
 $$
has rank greater or equal than $2$, whence this Lie algebra, as well as generic nilpotent algebras $\n$
with $(m,n)=(3,5)$, are of finite type.

The second proof uses the fact that there exist three linearly independent operators $J_1,J_2,J_3\in\mathfrak{so}(n)$
such that any 2-dimensional subspace of their span generates the whole Lie algebra $\mathfrak{so}(n)$. Note that any 3 generic skew-symmetric operators satisfy this property.
Statements of this kind can be found in~\cite{BG}. Assume that $J_{\n_{-2}}$ contains
three operators $J_{z_1},J_{z_2},J_{z_3}$ of the indicated type. Then the intersection $L^\dagger$
of the three-dimensional span$\{ z_1,z_2,z_3\}\subset\n_{-2}$ with the hyperplane $L$,
used in criterion~\eqref{Lcriterion}, has dimension 2 or 3.
Since $L^\dagger\subset L$, there exists a non-zero vector $x\in\n_{-1}$ satisfying $J_zx=0$ for
all $z\in L^\dagger$.
However $L^\dagger$ generates the Lie algebra $\mathfrak{so}(n)$ and thus we get $Ax=0$ for all
$A\in\mathfrak{so}(n)$, hence $x=0$. This contradiction proves the result.
 \end{proof}

It is natural to investigate the moduli space of nilpotent Lie structures on $\n=\n_{-2}\oplus\n_{-1}$,
i.e., the quotient of $N(m,n)$ by the natural action of $\op{GL}(n)$ on $\n$,
induced by the action on $\n_{-1}$.
This is no longer a manifold due to existence of singular orbits.
However since the action is algebraic, it allows a rational quotient on a Zariski open stratum by Rosenlicht's theorem \cite{Ros}. Thus the quotient is a rational space and it has positive dimension in the following cases:
 \begin{itemize}
\item $\bar{d}=\min\{m,\binom{n}2-m\}\ge3$, $n\ge6$, because
    $\dim\mathfrak{sl}(n)<\dim\op{Gr}_m(\mathfrak{so}(n))$. The standard homothety acts trivially on the Grassmanian, so we consider its quotient by the projective group $\op{PSL}(n)$.
\item $\bar{d}=2$, $n\in2\mathbb{Z}$, $n>6$, because of the following. The algebra structure on $\n$ is
    given by two skew-symmetric operators $J_{z_1},J_{z_2}$ on $\n_{-1}$, which are generically non-degenerate. They are however not
    invariants of the Lie algebra structure, but only of the chosen $M$-algebra: under the change
    of metrics on $\n_{-2}$ and $\n_{-1}$ given by symmetric matrices $B$ and $A$ of sizes
    $2\times2$ and $n\times n$ respectively, the operator $J_z$ changes to $AJ_{Bz}$. Passage to
    $I=J_{z_1}^{-1}J_{z_2}$ eliminates dependence on $A$, and the action of the M\"obius group
    $I\mapsto \frac{a+bI}{c+dI}$ eliminates dependence on $B$. Since the spectrum of $I$ generically
    consists only of double eigenvalues with totality $|\op{Sp}(I)|=n/2$, we obtain a continuous invariant
    for $n>6$.
 \end{itemize}
In all other cases, there is an open orbit, and thus, no moduli for 2-step structures $\n\in N(m,n)$.
This is obvious if $\bar{d}\le1$. In the remaining cases,
the codimension of the orbit of $\op{PSL}(n)$ on $\op{Gr}_m(\mathfrak{so}(n))$ is
 \begin{equation}\label{codim}
\dim\op{Gr}_m(\mathfrak{so}(n))-\dim\mathfrak{sl}(n)+d(m,n)=
m\cdot\tbinom{n}2-m^2-n^2+1+d(m,n),
 \end{equation}
where $d(m,n)$ is the dimension of the stabilizer of a generic point $Z$ in the Grassmannian, or  equivalently, the minimum of dimensions of stabilizers of all points, which was computed in~
\cite[Section 4.3]{E2}:
 $$
d(2,2k+1)=2k+4,\ d(2,4)=7,\ d(2,6)=9,\ d(3,4)=6,\ d(3,5)=3.
 $$
It is straightforward to check that the value in~\eqref{codim} is zero for these bi-dimensions, and that it is positive for $(2,2k)$ since $d(2,2k)=3k$ for $k>3$.

We conclude generic rigidity for the moduli on the strata of highest dimension in
the quotient of $N(m,n)$ by $\op{PSL}(n)$ in the case $m>2$.


\section{Digression: rigidity vs.\ pseudo $H$-type}\label{sec:add}

Let us at first discuss the rigidity problem in lowest dimensions.
The first non-trivial case is thus $\dim\n_{-2}=3$.
The fundamental property implies that $\dim\n_{-1}\ge3$ and in the case of equality
$\dim\n_{-2}=\dim\n_{-1}=3$ the bracket identifies $\n_{-2}=\Lambda^2\n_{-1}$.
It is well-known, see~\cite{CS,Y}, that the Tanaka prolongation of $\n$ in this case is the simple Lie
algebra of type $B_3$ with the 2-grading induced by the parabolic subalgebra $\mathfrak{p}_3$. Namely, it is $\mathfrak{so}(3,4)$ in the real case or $\mathfrak{so}(7,\mathbb{C})$ in
the complex case. Thus the algebra $\n$ with $\dim\n_{-2}=\dim\n_{-1}=3$ is rigid.

The situation changes when $\dim\n_{-2}=3$ and $\dim\n_{-1}=4$. The classification of such graded 2-step Lie algebras,
which can be extracted from~\cite{Ku}, is as follows. Let $e_1,e_2,e_3,e_4$ be a basis of $\n_{-1}$,
and let $f_1,f_2,f_3$ be that of $\n_{-2}$. The non-trivial brackets of $\n$ in every of six non-isomorphic
cases are given below. These cases are numerated in loc.cit.\ as $m7\_2\_2$, $m7\_2\_3$, $m7\_2\_4$, $m7\_2\_2r$,
$m7\_2\_5$ and $m7\_2\_5r$ respectively:
 \begin{gather}
[e_1,e_4]=f_1,\ [e_2,e_4]=f_2,\ [e_3,e_4]=f_3; \label{GNLA1}  \\
[e_1,e_4]=f_1,\ [e_2,e_4]=f_2,\ [e_3,e_4]=f_3, [e_2,e_3]=f_1; \label{GNLA2} \\
[e_1,e_4]=f_1,\ [e_2,e_3]=f_2,\ [e_3,e_4]=f_3; \label{GNLA3} \\
[e_1,e_3]=[e_4,e_2]=f_1,\ [e_1,e_4]=[e_2,e_3]=f_2,\ [e_3,e_4]=f_3; \label{GNLA4} \\
[e_1,e_2]=[e_3,e_4]=f_1,\ [e_1,e_4]=f_2,\ [e_2,e_3]=f_3; \label{GNLA5} \\
[e_1,e_2]=[e_3,e_4]=f_1,\ [e_1,e_3]=[e_4,e_2]=f_2,\ [e_1,e_4]=[e_2,e_3]=f_3. \label{GNLA6}
 \end{gather}
Clearly in cases~(\ref{GNLA1}--\ref{GNLA3}) $\op{rank}(\op{ad}_{e_1})=1$, and
for \eqref{GNLA4} we have to use the complexification $\op{rank}(\op{ad}_{e_1+ie_2})=1$,
so the corresponding algebras $\n$ are of infinite type by the rank 1 criterion \cite{DR,O}.
On the contrary, in cases \eqref{GNLA5} and \eqref{GNLA6} the algebras $\n$ are rigid.
Indeed, in these cases they are isomorphic to the pseudo $H$-type algebras $\n^{1,2}$ and $\n^{3,0}$ respectively. Thus, we conclude the following.
 \begin{proposition}
The algebra $\n$ with $(\dim\n_{-2},\dim\n_{-1})=(3,4)$ is of finite type if and only if it is of
pseudo $H$-type.
 \end{proposition}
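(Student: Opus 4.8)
The plan is to combine the explicit classification in~\eqref{GNLA1}--\eqref{GNLA6} with the rigidity results already established, treating the two implications separately. For the ``if'' direction, suppose $\n$ is of pseudo $H$-type. Since $\dim\n_{-2}=3\geq 3$, rigidity is immediate from the corollary stating that every pseudo $H$-type algebra with $\dim\n_{-2}\geq 3$ is rigid; no further computation is needed.

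For the converse I would argue by elimination, using that the six algebras in~\eqref{GNLA1}--\eqref{GNLA6} exhaust all isomorphism classes in bi-dimension $(3,4)$. For the three algebras~\eqref{GNLA1}--\eqref{GNLA3} the generator $e_1$ satisfies $\op{rank}(\op{ad}_{e_1})=1$, so $\op{ad}_{e_1}$ annihilates a hyperplane of $\n_{-1}$; by the corank one criterion these algebras are of infinite type. The algebra~\eqref{GNLA4} does not visibly contain a rank-one $\op{ad}$ over $\real$, so I would pass to the complexification and verify $\op{rank}(\op{ad}_{e_1+ie_2})=1$, concluding infinite type via Corollary~\ref{coro:realcomp}. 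Hence any finite-type algebra with $(\dim\n_{-2},\dim\n_{-1})=(3,4)$ must be isomorphic to~\eqref{GNLA5} or~\eqref{GNLA6}.

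The remaining step is to recognize~\eqref{GNLA5} and~\eqref{GNLA6} as the pseudo $H$-type algebras $\n^{1,2}$ and $\n^{3,0}$, respectively. To do so I would fix on $\n_{-2}$ the symmetric bilinear form of signature $(1,2)$ (respectively $(3,0)$) making $\{f_1,f_2,f_3\}$ orthonormal, choose a compatible non-degenerate form on $\n_{-1}$, compute the operators $J_{f_i}\in\End(\n_{-1})$ directly from the structure constants via Definition~\ref{def:J}, and then verify the Clifford relations~\eqref{eq:anticom}. Once the pseudo $H$-type structure is confirmed, rigidity again follows from the same corollary, which closes the biconditional.

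The main obstacle is this last identification. The signature on $\n_{-2}$ cannot be chosen freely: pseudo $H$-type structure is genuinely sensitive to it, and only the correct signature (together with a matching admissible $\Cl(\n_{-2})$-module structure on $\n_{-1}$) will make the $J_{f_i}$ satisfy~\eqref{eq:anticom}. Thus the computation of the $J_{f_i}$ and the verification of the anticommutators is where the real content lies, whereas the elimination of the four infinite-type cases is a routine application of the rank-one criterion.
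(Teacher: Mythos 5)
Your proposal is correct and takes essentially the same route as the paper: both arguments run through the six-algebra classification \eqref{GNLA1}--\eqref{GNLA6}, discard \eqref{GNLA1}--\eqref{GNLA3} by the rank-one criterion and \eqref{GNLA4} after complexification via $\op{rank}(\op{ad}_{e_1+ie_2})=1$, and then identify \eqref{GNLA5} and \eqref{GNLA6} with the pseudo $H$-type algebras $\n^{1,2}$ and $\n^{3,0}$, whose rigidity follows from the corollary to Theorem~\ref{th:JJJcomp}. The only difference is one of detail: you propose to verify the identification by explicitly computing the maps $J_{f_i}$ and checking the Clifford relations \eqref{eq:anticom} in the correct signature, a step the paper simply asserts.
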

The corresponding statement does not hold for $n=\dim\n_{-1}>4$. Already for
$(m,n)=(3,5)$ we have rigid algebras $\n\in N(m,n)$ that are not of pseudo $H$-type.
A criterion for 2-step nilpotent Lie algebras to be of $H$-type was obtained in~\cite{LT}.
Indeed,   there are no 5-dimensional representations without trivial modules for the Clifford algebra generated by a 3-dimensional scalar product space, see Table~\ref{t:dim} in Section~\ref{sec:J2Htype}. More generally, the following holds
 \begin{theorem}
Generic (resp.\ generic rigid) algebras $\n\in N(m,n)$ in the range $m>1$ (resp.\ $m>2$)
except for $(m,n)\in\{(2,4),(3,4)\}$ are not of pseudo $H$-type.
 \end{theorem}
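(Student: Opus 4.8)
The plan is to combine a sharp divisibility constraint coming from the Pfaffian with the moduli dimension count of the previous section, and then to settle a short list of low-dimensional cases by hand. First I would dispose of the parenthetical rigid version: for $m>2$ Theorem~\ref{th:3} makes the rigid locus Zariski dense in $N(m,n)$, so a generic rigid algebra is a generic algebra and it suffices to prove that a generic $\n$ fails to be of pseudo $H$-type for $m>1$, apart from $(2,4)$ and $(3,4)$.

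The decisive elementary observation is a parity constraint. The bracket gives, for each $z\in\n_{-2}$, a skew form $\omega_z(x,y)=\langle z,[x,y]\rangle_{\n_{-2}}$ on $\n_{-1}$, and its Pfaffian $P(z)=\op{Pf}(\omega_z)$ is a metric-free homogeneous polynomial of degree $n/2$ on $\n_{-2}$. If $\n$ is of pseudo $H$-type then $J_z^2=-\langle z,z\rangle_{\n_{-2}}\Id$, so $\det(\omega_z)=\det(g)\det(J_z)=c\,\langle z,z\rangle_{\n_{-2}}^{\,n/2}$ for the admissible metric $g$, whence $P(z)^2=c\,\langle z,z\rangle_{\n_{-2}}^{\,n/2}$. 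Since for $m\ge 2$ the center form is a nondegenerate, hence squarefree, quadratic, this equality forces $n/2$ to be even. Thus pseudo $H$-type algebras with $m\ge 2$ exist only when $4\mid n$, and then $P(z)=c\,\langle z,z\rangle_{\n_{-2}}^{\,n/4}$ is a perfect power of the center quadratic. In particular, for every $n\not\equiv 0\pmod 4$ -- all odd $n$ and all $n\equiv 2$, covering $(2,2k{+}1),(3,5),(2,6),\dots$ -- no pseudo $H$-type structure exists, and the assertion is vacuous.

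It remains to treat $4\mid n$. For $n\ge 8$ one lands, by the codimension formula~\eqref{codim} and the values $d(m,n)$, in the range where the moduli space $N(m,n)/\op{PSL}(n)$ is positive-dimensional; since the admissible Clifford modules give only finitely many isomorphism classes, the pseudo $H$-type algebras map to finitely many points and are non-generic. This leaves the bi-dimensions with $n=4$, where $N(m,4)=\op{Gr}_m(\Lambda^2\mathbb R^4)$ has an open orbit under $\op{SL}(4)=\op{Spin}(3,3)$. Here $P(z)=c\,\langle z,z\rangle_{r,s}$ is just the center quadratic, so the orbit is read off from the signature of the induced Pfaffian form on the $m$-plane. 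For $m=2$ and $m=3$ every signature $(r,s)$ with $r+s=m$ is realised in dimension $4$, so the pseudo $H$-type algebras exhaust all nondegenerate signatures and hence the whole generic locus; this is exactly why $(2,4)$ and $(3,4)$ are the exceptions (for $(3,4)$ this also follows from the Proposition above). For $m\ge 5$ no signature admits a $4$-dimensional module, so no pseudo $H$-type algebra exists.

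The genuine obstacle is the single pair $(4,4)$, and everything hinges on the admissible-dimension table (Table~\ref{t:dim}). If no signature $(r,s)$ with $r+s=4$ carries a $4$-dimensional admissible module, then pseudo $H$-type $(4,4)$ does not exist and the claim is again vacuous; but if such a module occurs -- necessarily of indefinite signature, since $(4,0)$ and $(0,4)$ demand dimension $8$ -- then one cannot separate it from the generic algebra by the Pfaffian alone, because the ambient Pfaffian form has signature $(3,3)$ and every nondegenerate $4$-plane is already indefinite; indeed over $\mathbb C$ the unique open orbit carries a Clifford structure. In that case the resolution must come from a finer real invariant: a splitting of the indefinite $\op{Spin}(3,3)$-stratum into several $\op{SL}(4,\mathbb R)$-orbits, only some of which are of pseudo $H$-type, or equivalently a comparison of the stabilizer dimension $d(4,4)$ with the automorphism dimension of the Clifford model, locating the pseudo $H$-type algebras strictly inside the generic stratum. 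Carrying out this real-form bookkeeping for $(4,4)$ is the hard core of the argument; the remaining bi-dimensions are settled by the divisibility constraint, the moduli count, and the signature-exhaustion for $(2,4)$ and $(3,4)$.
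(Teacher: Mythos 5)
Your strategy is genuinely different from the paper's, and much of it is sound: the Pfaffian parity argument (that $P(z)^2=c\,\la z,z\ra_{\n_{-2}}^{n/2}$ with $\la.\,,.\ra_{\n_{-2}}$ nondegenerate, hence squarefree, forces $4\mid n$ for any pseudo $H$-type algebra with $m\ge 2$) is correct and kills all $n\not\equiv 0\pmod 4$ in one stroke, and the count ``finitely many isomorphism classes $=$ finitely many non-open $\op{PSL}(n)$-orbits'' is a legitimate substitute for the paper's sharper bound $\binom{n+1}2$ (from $\op{GL}(n)$ modulo the orthogonal group) on the dimension of the pseudo $H$-type locus. But there is a genuine gap exactly where you flag ``the hard core'': you never resolve the bi-dimension $(4,4)$. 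The dichotomy you set up is answered instantly by Table~\ref{t:dim}, which you cite but apparently consulted only for the definite signatures: \emph{every} signature $(r,s)$ with $r+s=4$, including $(3,1)$, $(2,2)$ and $(1,3)$, has minimal admissible module of dimension $8$, so no pseudo $H$-type algebra exists in bi-dimension $(4,4)$ and the case is vacuous --- which is precisely how the paper dismisses it. The ``real-form bookkeeping'' you describe as the essential difficulty is the branch of the dichotomy that never occurs; as written, the proof is simply incomplete at the one case your own argument isolates as decisive. (Relatedly, your signature-exhaustion claim for $(3,4)$ is false --- $(2,1)$ and $(0,3)$ also demand dimension $8$, and only $(3,0)$ and $(1,2)$ are realized in dimension $4$ --- but since $(2,4)$ and $(3,4)$ are the excluded exceptions, that slip is harmless.)

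A second, smaller gap: for $4\mid n$, $n\ge 8$, you assert that \eqref{codim} always places you in the positive-dimensional-moduli regime, but this fails when $\bar d=\min\{m,\binom n2-m\}\le 1$, i.e.\ $m\in\{\binom n2,\binom n2-1\}$, where $\op{PSL}(n)$ has an open orbit and the ``finitely many orbits are non-generic'' argument collapses. Those bi-dimensions must instead be handled, as the paper does, by vacuousness: for $m$ of order $n^2$ the minimal admissible module dimension grows exponentially in $m$ (Table~\ref{t:dim} plus Bott periodicity), far exceeding $n$, so no pseudo $H$-type structure exists there at all. Both gaps are fillable from the table, but filling them is required for a complete proof.
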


 \begin{proof}
Any representation of the Clifford algebra $\op{Cl}(\n_{-2},\langle\cdot,\cdot\rangle_{\n_{-2}})$ is decomposed
into a direct sum of irreducible modules. We have to exclude trivial submodules which lead to infinite type.
Thus, first of all, not every pair  of bi-dimensions $(m,n)$ can be realized for a pseudo $H$-type algebra.

Second, if $\n_{-1}=\oplus_{i=1}^k U_i$ is the sum of $k$ irreducible modules, then $k$ scalings
contribute to the choice of module, and hence, to fixing  the scalar product on $\n$:
$\langle\cdot,\cdot\rangle=\langle\cdot,\cdot\rangle_{\n_{-2}}+\langle\cdot,\cdot\rangle_{\n_{-1}}$.

Next, changing the scalar product we keep the same Lie structure of $\n$ but a different $J$-representation
as an $M$-type algebra. Namely, any other scalar product, having decomposition
$\n=\n_{-2}\oplus\n_{-1}$ orthogonal, can be obtained via two symmetric, not necessarily positive definite,
operators $A\in\op{End}(\n_{-1})$ and $B\in\op{End}(\n_{-2})$:
$\langle\cdot,\cdot\rangle_\text{new}=\langle A.\,,.\rangle_{\n_{-1}}+\langle B.\,,.\rangle_{\n_{-2}}$.
The $J$-representation of the Lie algebra
structure changes so $J_z\rightsquigarrow A\circ J_{Bz}$. The change of the basis $z\mapsto Bz$ in $\n_{-2}$
does not influence the dimension of the space of pseudo $H$-type algebras in $N(m,n)$, while the symmetric operator $A\in \op{End}(\n_{-1})$ does contribute to it.

Alternatively, we can think of $\op{GL}(n)$ changing the basis in $\n_{-1}$, inducing the change in $\n_{-2}$.
But since the orthogonal group preserves $\langle\cdot,\cdot\rangle_{\n_{-1}}$, we obtain only $\binom{n+1}2$
transformations, including the above $k$ scalings, and this number  bounds from above the
dimension of the space of $H$-type algebras.

For $n>4$, $1<m<\binom{n}2-1$, we have $\binom{n+1}2<md=\dim\op{Gr}_d(\mathfrak{so}(n))$,
whence a generic $\n\in N(m,n)$ in this range of bi-dimensions is not of pseudo $H$-type.

For $n>2$ it can be checked that in the cases $m=\binom{n}2$ the only Lie structure
(that is the modelled on the simple algebra $B_n$ with the grading corresponding to $\mathfrak{p}_n$)
is not of pseudo $H$-type, as well as all finite number of structures $\n$ for $m=\binom{n}2-1$
are not such. This follows by dimensional reasons similarly to the bi-dimension $(3,5)$ considered before the theorem. Thus, we only have to study the cases $(m,n)$ with $3\le n\le 4$, which do not satisfy the above inequalities.

They are precisely the cases $(2,4)$, $(3,4)$ and $(4,4)$. The last one may not have any pseudo
$H$-type as it follows from Table \ref{t:dim} in the next section. But the first two both admit a pseudo
$H$-type structure and they are two exceptions:  in the first case $\n$ is always of infinite type and in the second case it is generically of finite type. This finishes the proof.
 \end{proof}

An alternative approach to the proof of the above theorem for $\dim\n_{-2}>2$ is as follows.
For  pseudo $H$-type algebras, almost all operators $J_{z_i}$ are invertible and one can consider
the operators $T_{ij}=J_{z_i}^{-1}J_{z_j}$ for some generic choice of the basis $z_i\in\n_{-2}$.
Similarly to how it was done in \cite{LT} for $H$-type algebras, one can show that $T_{ij}$ generate
a subalgebra of the even part $\op{Cl}_0(\n_{-2},\langle .\,,.\rangle_{\n_{-2}})$ of the Clifford algebra.
In particular, this subalgebra has dimension at most $p=2^{m-1}$. Since the minimal dimension of
the Clifford module is $2^{\frac{m-1}2}\le n$, we conclude that $p\le n^2=\dim\op{End}(\n_{-1})$, where
the inequality is strict unless $n\equiv7\!\!\mod8$. Due to the results of \cite{BG}
the operators $T_{ij}$, obtained from generic operators $J_k$ as above, 
generate the whole endomorphism algebra $\op{End}(\n_{-1})$.
Therefore, the claim follows from the inequality $p<n^2$ for $n\not\equiv7\!\!\mod8$,
and by comparison of the structures of $\op{Cl}_0(\n_{-2}, \langle .\,,.\rangle_{\n_{-2}})$ with $\op{End}(\n_{-1})$ otherwise.


\section{Pseudo $H$-type algebras with $J^2$-condition}\label{sec:J2Htype}

In this section we clasify pseudo $H$-type algebras satisfying the $J^2$-condition.

 \begin{definition}\label{def:admis_mod}
Let $J\colon\Cl(U,\langle\cdot,\cdot\rangle_{U})\to\End(V)$ be a Clifford algebra representation. The module $V$ is called admissible if there is a bilinear form $\langle\cdot,\cdot\rangle_{V}$ such that  the endomorphisms $J_z$ are skew-symmetric for any $z\in U$, that is
\begin{equation}\label{eq:skew}
\langle J_zx,y\rangle_{V}=-\langle x,J_zy\rangle_{V}.
\end{equation}
This scalar product $\langle\cdot,\cdot\rangle_{V}$ is called admissible for the module~$V$.
 \end{definition}

It is well-known~\cite{H,Lam} that if $U$ is endowed with a positive definite bilinear form $\langle\cdot,\cdot\rangle_{U}$, then the module $V$ is admissible with respect to some positive definite bilinear form
$\langle\cdot,\cdot\rangle_{V}$. In particular, any irreducible module is admissible. In the case when the bilinear form $\langle\cdot,\cdot\rangle_{U}$ is indefinite, the module $V$ is not always admissible and sometimes only the direct sum $V\oplus V$ is admissible. As a consequence, in this case not all irreducible modules are admissible. Notice also that  if $(U,\langle\cdot,\cdot\rangle_{U})$ is an indefinite scalar product space, then any admissible module will necessarily be a neutral space~\cite{Ci}. We call an admissible module of minimal possible dimension the {\it minimal admissible module}.

In Table~\ref{t:dim} we give the dimensions of the minimal admissible modules $V^{r,s}$, $r,s\leq 8$. Dimensions of other minimal admissible modules can be obtained by Bott's periodicity, see \cite{LM}. The bold integers are used for the minimal admissible modules which are direct sums of two irreducible Clifford modules. Others denote the dimensions of minimal admissible modules, that are also irreducible Clifford modules. The notation ${}_{\times 2}$ means that there are two minimal admissible modules, related to non-isomorphic irreducible modules.

\begin{table}[h]
\center\caption{Dimensions of minimal admissible modules}
\begin{tabular}{|c||c|c|c|c|c|c|c|c|c|}
\hline
${\text{\small 8}} $&$ {\text{\small{16}}}$&${\text{\small 32}}$&${\text{\small{64}}}
$&${\text{\small{64}$_{\times 2}$}}$&${\text{\small{128}}}$&${\text{\small{128}}}$&${\text{{\small{128}}}}$&$
{\text{{\small{128}$_{\times 2}$}}} $&${\text{\small{256}}}$
\\
\hline
${\text{\small 7}}$ &$ {\text{\small{16}}}$&${\text{\small{32}}}$&$
{\text{\small{\bf{64}}}} $&${\text{\small{64}}}
$&${\text{{\bf\small{128}}}}$&${\text{\small{{\bf{128}}}}}  $&${\text{\bf{\small{128}}}}$&$ {\text{\small{128}}} $&${\text{\small{256}}}$
\\
\hline
${\text{\small 6}}$ &${\text{\small{16}}}$&${\text{\small{16}$_{\times 2}$}}$&${\text{\small{32}}}$&${\text{\small{32}}}$&${\text{\small{\bf{64}}}}
$&${\text{\bf{\small{64}$_{\times 2}$}}} $&${\text{\bf\small{128}}} $&${\text{\small{128}}}$&$ {\text{\small{256}}} $
\\
\hline
${\text{\small 5}} $&${\text{\small\bf {16}}}$&${\text{\small 16}}$&${\text{\small 16}}$&${\text{\small 16}}$&${{\text{\small\bf 32}}}$&${{\text{\small \bf{64}}}} $&${\text{\small{\bf{128}}}}$&${\text{\small{128}}} $&$\text{\small{\bf 256}}$
\\
\hline
${\text{\small 4}} $&$  {\text{\small 8}}$&$ {\text{\small 8}}$&$
{\text{\small 8}}$&$ 8_{\times 2}$&$16$&${\text{\small 32}}$&${\text{\small 64}}
$&${\text{\small 64}_{\times 2}} $&${\text{\small{128}}}$
\\
\hline
${\text{\small 3}}$&${{\text{\small\bf 8}}}$&${{\text{\small\bf 8}}}$&${\text{\small\bf 8}}$&$8$&$16$&$32$
&${\text{\small\bf 64}}$&$64$&${{\text{\small\bf 128}}}$
\\
\hline
${\text{\small 2}}$&${{\text{\small\bf 4}}}$&$
{\bf 4_{\times 2}}$&${\bf 8}$&$ 8$&$16$&$16_{\times 2}$&$32$&$32 $&${{\text{\small\bf 64}}}$
\\
\hline
${\text{\small 1}}$ &${\bf 2}$&${\bf 4}$&${\bf 8}$& $8$&${\bf 16}$&$16$&$16$&$16$&${{\text{\small\bf 32}}}$
\\
\hline
${\text{\small 0}} $&$  1$&$ 2$&$ 4$&$ 4_{\times 2}$&$ 8$&$ 8$&$ 8$&$ 8_{\times 2}$&$16$
\\
\hline\hline
{$s$/$r$}&  {\text{\small 0}}& {\text{\small 1}}&
{\text{\small 2}}&{\text{\small 3}} & {\text{\small 4}}& {\text{\small 5}}& {\text{\small 6}}& {\text{\small 7}}& {\text{\small 8}}
\\
\hline
\end{tabular}\label{t:dim}
\end{table}

As it was mentioned before, pseudo $H$-type Lie algebras are closely related to Clifford algebras. Namely, for a pseudo $H$-type Lie algebra $\n=(\n_{-2}\oplus\n_{-1},\langle\cdot,\cdot\rangle_{\n_{-1}}+\langle\cdot,\cdot\rangle_{\n_{-2}})$ one has the representation
$ J\colon\Cl(\n_{-2}, \langle\cdot,\cdot\rangle_{\n_{-2}})\to\End(\n_{-1})$.
Conversely, for an admissible $\Cl(U,\langle\cdot,\cdot\rangle_{U})$-module $(V,\langle\cdot,\cdot\rangle_V)$, the representation induces a graded 2-step nilpotent Lie algebra structure on $\n_{-2}\oplus\n_{-1}=U\oplus V$ defining the Lie bracket by equation~\eqref{def:J}.

Let us assume that $\n$ satisfies the $J^2$-condition. If $x\in\n_{-1}$ and $z,z'\in\n_{-2}$ is any orthogonal pair satisfying
\begin{equation}\label{eq:forall}
\la J_{\tilde z}x,J_zJ_{z'}x\ra_{\n_{-1}}=0,\qquad \ \forall\ \ \tilde z\in\n_{-2},
\end{equation}
then $\la x,x\ra_{\n_{-1}}=0$. Indeed using \eqref{JJxx} we get
$$
0=\la J_{\tilde z}x,J_zJ_{z'}x\ra_{\n_{-1}}=\la J_{\tilde z}x,J_{z''}x\ra_{\n_{-1}}=\la \tilde z,z''\ra_{\n_{-2}}\la x,x\ra_{\n_{-1}}.
$$
Thus, to show that a pseudo $H$-type Lie algebra $\n$ does not satisfy the $J^2$-condition it is enough to find a vector $x\in\n_{-1}$, $\langle x,x\rangle_{\n_{-1}}\neq 0$ and an orthogonal pair $z,z'\in\n_{-2}$ such that~\eqref{eq:forall} holds.

For a minimal admissible module $V$ let us call the modules $V^{\oplus k}$ isotypic.

\begin{theorem}\label{th:J2}
Only the following pseudo $H$-type algebras satisfy the $J^2$-condition:
\begin{itemize}
\item[{(1)}] {$\dim\n_{-2}=0$: ${\mathbb R}^n$ -- any module (vector space) over $\mathbb{R}$.}
\item[{(2)}] { $\dim\n_{-2}=1$: $\n^{1,0}$ and $\n^{0,1}$ for any admissible module.}
\item[{(3)}] { $\dim\n_{-2}=3$: $\n^{3,0}$ and $\n^{1,2}$ for any isotypic module.}
\item[{(4)}] { $\dim\n_{-2}=7$: $\n^{7,0}$ and $\n^{3,4}$ for the minimal admissible modules.}
\end{itemize}
\end{theorem}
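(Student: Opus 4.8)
The plan is to read the $J^2$-condition through the invariance criterion recorded just before the theorem: using the identity $\la J_zx,J_{z'}x\ra_{\n_{-1}}=\la z,z'\ra_{\n_{-2}}\la x,x\ra_{\n_{-1}}$, a pseudo $H$-type algebra satisfies the $J^2$-condition if and only if for every anisotropic $x$ (i.e. $\la x,x\ra_{\n_{-1}}\neq0$) and every orthogonal pair $z,z'\in\n_{-2}$ one has $J_zJ_{z'}x\in J_{\n_{-2}}x$. Two elementary facts drive everything. First, $\la J_zJ_{z'}x,x\ra_{\n_{-1}}=-\la J_{z'}x,J_zx\ra_{\n_{-1}}=-\la z',z\ra_{\n_{-2}}\la x,x\ra_{\n_{-1}}=0$, so $J_zJ_{z'}x$ always lies in $x^{\perp}$. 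Second, $x\perp J_{\n_{-2}}x$ and, for anisotropic $x$, both $\mathbb{R}x$ and $J_{\n_{-2}}x$ are nondegenerate, so $V_x:=\mathbb{R}x\oplus J_{\n_{-2}}x$ is a nondegenerate subspace of $\n_{-1}$ of dimension $1+\dim\n_{-2}$.

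For necessity I would turn $V_x$ into a composition algebra. Assuming the $J^2$-condition, $V_x$ is invariant under every $J_z$, hence an admissible Clifford module in its own right. Normalizing $\la x,x\ra_{\n_{-1}}=\epsilon=\pm1$, define a bilinear product on $V_x$ by declaring $x$ the unit and setting $L_{\alpha x+J_zx}=\alpha\Id+J_z$ and $u*v:=L_uv$. The skew-symmetry of $J_z$ and the relation $J_z^2=-\la z,z\ra_{\n_{-2}}\Id$ give $L_u^{*}L_u=(\alpha^2+\la z,z\ra_{\n_{-2}})\Id$, so the norm $\epsilon\la .\,,.\ra_{\n_{-1}}|_{V_x}$ is multiplicative and $x$ is a two-sided identity. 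Thus $V_x$ is a real unital composition algebra, and the generalized Hurwitz theorem forces $\dim V_x\in\{1,2,4,8\}$, i.e. $\dim\n_{-2}\in\{0,1,3,7\}$. Since the norm of a real composition algebra is either definite or neutral, and a short count shows the composition norm has signature $(1+r,s)$ when $\n_{-2}$ has signature $(r,s)$, matching $(1+r,s)$ to $(n,0)$ or $(n/2,n/2)$ in each dimension singles out exactly $(r,s)\in\{(0,0)\}$, $\{(1,0),(0,1)\}$, $\{(3,0),(1,2)\}$, $\{(7,0),(3,4)\}$, which is the list of the theorem. (The same exclusions can be read off Table~\ref{t:dim}, since no admissible $\Cl(\mathbb R^{r,s})$-module of dimension $r+s+1$ exists outside these bi-signatures.)

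For sufficiency, and the module refinement, I would argue per dimension. When $\dim\n_{-2}\le1$ there is no orthogonal pair with both entries nonzero, so the condition holds with $z''=0$ for any admissible module, covering cases (1) and (2). For any \emph{minimal} admissible module one has $\dim\n_{-1}=\dim\n_{-2}+1$, whence $V_x=\n_{-1}$ and $x^{\perp}=J_{\n_{-2}}x$; as $J_zJ_{z'}x\in x^{\perp}$, this immediately yields $J_zJ_{z'}x\in J_{\n_{-2}}x$, proving the $J^2$-condition for $\n^{3,0},\n^{1,2}$ on the $4$-dimensional module and for $\n^{7,0},\n^{3,4}$ on the $8$-dimensional module. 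In dimension $3$ one gains more: the (split) quaternions are \emph{associative}, so for orthogonal imaginary $z,z'$ the product $z''=zz'$ is again imaginary and $J_zJ_{z'}=J_{z''}$ holds as an operator identity on every isotypic module $U^{\oplus m}$, giving case (3) for arbitrary multiplicity.

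The main obstacle is the multiplicity dichotomy between dimensions $3$ and $7$, where the non-associativity of the octonions enters. For $\dim\n_{-2}=3$ a non-isotypic module $U_{+}\oplus U_{-}$ fails: the volume element acts by opposite signs on the two inequivalent irreducibles, so $J_{z}J_{z'}$ equals $J_{z''}$ on $U_+$ and $J_{-z''}$ on $U_-$, and choosing $x$ with both components anisotropic forces $z''$ to take two incompatible values. For $\dim\n_{-2}=7$, even isotypic modules $\mathbb O^{\oplus m}$ with $m\ge2$ fail: non-associativity means the element $z''$ solving $L_{z''}x=J_zJ_{z'}x$ genuinely depends on $x$ (it equals $zz'$ only when the associator $[z,z',x]$ vanishes), so across two copies one can choose component-vectors yielding different $z''$, destroying the condition. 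Checking that these incompatibilities are always realized by an \emph{anisotropic} $x$ — and that, conversely, the single-copy octonionic algebra escapes them purely through the dimension count of the previous paragraph — is the delicate point that separates $\n^{7,0},\n^{3,4}$ (only the minimal module) from $\n^{3,0},\n^{1,2}$ (any isotypic module).
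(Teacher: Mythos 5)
Your two structural arguments are correct and take a genuinely different route from the paper's. For necessity, the paper compares $\dim(\mathbb R x\oplus J_{\n_{-2}}x)=r+s+1$ against the table of minimal admissible module dimensions (and repeats the count inside the cases $r+s=3,7$ to kill $(2,1),(0,3)$, etc.), whereas you turn $V_x=\mathbb R x\oplus J_{\n_{-2}}x$ into a unital composition algebra whose norm has signature $(1+r,s)$ and invoke Hurwitz; this handles the dimension restriction and the signature restriction in one stroke, and it is sound (the closure of the product needs the invariance of $V_x$ under all $J_z$, which does follow from the $J^2$-condition, and injectivity of $z\mapsto J_zx$ for anisotropic $x$ makes $L_u$ well defined and linear in $u$). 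For sufficiency on minimal modules, the paper verifies the condition by explicit Clifford computations (the relations \eqref{eq:12} in dimension 3, and the involutions $P_1,P_2,P_3$ with their eigenspace tables for $(3,4)$ and $(7,0)$), whereas your observation that on a minimal module $x^{\perp}=J_{\n_{-2}}x$, so that $J_zJ_{z'}x\in x^\perp$ forces $J_zJ_{z'}x\in J_{\n_{-2}}x$, replaces all of that by a two-line dimension count. Your dimension-3 discussion (operator identities on isotypic modules, volume-element sign clash on non-isotypic ones) matches the paper's in substance.

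The genuine gap is the one you flag yourself: the failure of the $J^2$-condition for $\n^{7,0}$ and $\n^{3,4}$ on non-minimal, even isotypic, admissible modules. This is not a loose end — it is exactly the content that separates item (4) from item (3), and your ``different $z''$ across two copies'' sketch does not yet produce a witness. Two things are missing: (i) an explicit pair of component vectors in the two copies whose local solutions $z''$ genuinely differ (your associator heuristic is plausible for $\n^{7,0}$, where positive definiteness makes every nonzero $x$ anisotropic, but no vectors are exhibited); and (ii) for $\n^{3,4}$, whose admissible metric is neutral, a guarantee that the incompatibility is realized by an \emph{anisotropic} $x$ — a sum of two ``incompatible'' components can perfectly well be null, and then the $J^2$-condition imposes nothing. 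The paper settles both points concretely: it takes $x=w_1+J_{z_k}w_2$ with $w_1,w_2$ in the common eigenspace $E^{1,1,1}$ of the commuting involutions $P_1,P_2,P_3$, shows $\langle J_{z_3}J_{z_4}x,J_zx\rangle_{\n_{-1}}=0$ for every $z\in\n_{-2}$, which by \eqref{eq:forall} would force $\langle x,x\rangle_{\n_{-1}}=0$, and chooses $k\in\{1,2\}$ or $k\in\{5,6,7\}$ according to the relative signs of $\langle w_1,w_1\rangle_{\n_{-1}}$ and $\langle w_2,w_2\rangle_{\n_{-1}}$ precisely so that $x$ is anisotropic — yielding the contradiction. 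Until you supply an argument of this kind (your composition-algebra framework could be adapted to do it, but the neutral-signature bookkeeping must actually be carried out), case (4) of the theorem remains unproved in your write-up.
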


In Appendix \ref{Ap.A} we will explain these dimensions via a relation to the division algebras.
Explicit descriptions of the admissible modules are given in the proof.
They are also realized via simple Lie algebras as described in Appendix \ref{Ap.B}.

\begin{proof}
We start by presenting a dimensional argument related to Table~\ref{t:dim}, which shows that all pseudo $H$-type algebras $\n^{r,s}$ with $r+s\neq0,1,3,7$, do not satisfy the $J^2$-condition. Let us start by pointing out that if $\n^{r,s}(V)$, with a minimal admissible module  $V$, does not satisfy the $J^2$ condition, then no $\n^{r,s}$ satisfies it. Indeed, for any pseudo $H$-type algebra $\n^{r,s}(V)$, with a minimal admissible module $V$, there exists an element $x\in\n_{-1}=V$, $\langle x,x\rangle_{\n_{-1}}=1$, such that the set
\[
\{x,J_{z_i}x,J_{z_i}J_{z_j}x,J_{z_i}J_{z_j}J_{z_k}x,\cdots\}
\]
contains an orthonormal basis of $\n_{-1}$, see~\cite{FM}.

Since ${\mathbb R}x\oplus J_{\n_{-2}}x$ is an admissible module, it cannot have dimension less than the dimension of the minimal admissible module listed in Table~\ref{t:dim}. From this we see that the Clifford algebra ${\rm Cl}(\n_{-2},\langle\cdot,\cdot\rangle_{\n_{-2}})$ can possess an admissible module of the form ${\mathbb R}x\oplus J_{\n_{-2}}x$ of dimension $r+s+1$ only when $r+s=0,1,3,7$. In other words, if $r+s\neq0,1,3,7$, no pseudo $H$-type algebra $\n^{r,s}$ admits the $J^2$-condition.

Henceforth we focus on pseudo $H$-type algebras $\n^{r,s}$ with $m=r+s=0,1,3,7$.
Fix an orthonormal basis $\{z_1,\dotsc,z_{r+s}\}$ of the center $\n_{-2}$ of Lie algebra $\n^{r,s}$ with
$\la z_i,z_i\ra_{\n_{-2}}=1$, $1\leq i\leq r$ and $\la z_i,z_i\ra_{\n_{-2}}=-1$, $r<i\leq r+s$.

The $J^2$-condition is trivially satisfied when $m=\dim\n_{-2}=0,1$. Notice that for $\n^{1,0}$ or
$\n^{0,1}$ the admissible modules are isotypic $V^{\oplus k}$, where $V$ is the minimal admissible
module $\mathbb{C}$ or $\mathbb{R}\oplus\mathbb{R}$ respectively.

Let $\dim\n_{-2}=3$. A similar dimensional argument as before, using Table~\ref{t:dim}, shows that the cases $(r,s)\in\{(2,1),(0,3)\}$ do not satisfy the $J^2$-condition. For $(r,s)\in\{(3,0),(1,2)\}$, we obtain
$
(J_{z_1}J_{z_2}J_{z_3})^2=
\Id_{\n_{-1}}$.
In these cases there are two non-isomorphic irreducible 2-dimensional Clifford modules $V_+$ and $V_-$ of $\n_{-1}$, where the endomorphism $\Omega^{r,s}=J_{z_1}J_{z_2}J_{z_3}$ acts as the identity or minus the identity, respectively. In other words, the spaces $V_{\pm}$ are the eigenspaces of $\Omega^{r,s}$ with the eigenvalues $\pm 1$. In both cases  $(r,s)\in\{(3,0),(1,2)\}$ the dimension of the minimal  admissible module is $4$.

Case $(r,s)=(1,2)$. The minimal admissible module is either $V_+\oplus V_{+}$ or $V_-\oplus V_-$, see~\cite{FM,FM2}. It is necessary to point out that each module $V_+$ and $V_-$ is a null space and by this reason we need to double them to guarantee admissibility. It follows that neither of these direct sums is orthogonal.
For an orthonormal basis $\{z_1,z_2,z_3\}$ of $\n_{-2}$, it holds
\begin{equation}\label{eq:12}
\left\{\begin{array}{l}
 J_{z_1}x=-J_{z_2}J_{z_3}x,
\\
 J_{z_2}x=-J_{z_1}J_{z_3}x ,
\\
J_{z_3}x=J_{z_1}J_{z_2}x,
\end{array}\right.
\end{equation}
for all $x\in V_{+}$. The signs in \eqref{eq:12} change to the opposite ones for $x\in V_{-}$. Consider $\n^{1,2}(V_+\oplus V_{+})=\n_{-2}\oplus\n_{-1}$. Note that we could have chosen $\n^{1,2}(V_-\oplus V_{-})$ instead, but we would have obtained an isomorphic algebra.
Choose any $z=\sum_{i=1}^{3}a_iz_i$ and $z'=\sum_{i=1}^{3}b_iz_i$ such that $\langle z,z' \rangle_{\n_{-2}}=0$. Then
\begin{equation}\label{eq:exist12}
J_zJ_{z'}x
=\sum_{i,j}a_ib_jJ_{z_i}J_{z_j}x=\langle z,z' \rangle_{\n_{-2}}\Id_{\n_{-1}}x+\sum_{i\neq j}a_ib_jJ_{z_i}J_{z_j}x
=J_{z''}x,
\end{equation}
where $z''$ is a linear combination of the orthonormal basis with coefficients $\pm a_ib_j$, $i\neq j$ with the signs obtained by using~\eqref{eq:12}. We conclude that the pseudo $H$-type algebra $\n^{1,2}(V_+\oplus V_{+})$ satisfies the $J^2$-condition.

In the next step we allow the admissible module $\n_{-1}$ to be non-minimal.
There are (potentially several) non-isomorphic pseudo $H$-type algebras $\n^{1,2}(V^{p,q})$ where
$
\n_{-1}=V^{p,q}=(V_+\oplus V_+)^{\oplus p}\oplus(V_-\oplus V_-)^{\oplus q}$
is an orthogonal sum of minimal admissible modules. It is not hard to see that the Lie algebras $\n^{1,2}(V^{p,0})$ and $\n^{1,2}(V^{0,q})$ satisfy the $J^2$-condition. We refer to these cases as {\it isotypic}.

At last we claim that for $p,q>0$ the pseudo $H$-type Lie algebra $\n^{1,2}(V^{p,q})$ does not satisfy the $J^2$-condition. Since this property for $(p,q)$ implies the one for $(p',q')$ with $p'\ge p,q'\geq q$, 
it is enough to check this for $p=q=1$.

Choose $x_\pm\in V_\pm\oplus V_\pm$ such that 
$\langle x_+,x_+\rangle_{\n_{-1}}=\langle x_-,x_-\rangle_{\n_{-1}}\neq 0$.
Then $x=x_++x_-\in\n_{-1}$ is such that 
$\langle x,x\rangle_{\n_{-1}}=2\langle x_+,x_+\rangle_{\n_{-1}}\neq 0$ and we get
  \begin{eqnarray*}
\langle J_{z_1}J_{z_2}x,J_zx\rangle_{\n_{-1}}
&=&
a_3\langle J_{z_1}J_{z_2}x,J_{z_3}x\rangle_{\n_{-1}}
\\
&=&
a_3\langle J_{z_3}x_+,J_{z_3}x_+\rangle_{\n_{-1}}-a_3\langle J_{z_3}x_-,J_{z_3}x_-\rangle_{\n_{-1}}=0.
 \end{eqnarray*}

The case $(r,s)=(3,0)$ is well known, see~\cite{Ci,CDKR}, but still can be treated in a similar manner as before, changing~\eqref{eq:12} to the following ordered product
$$
\pm J_{z_i}x=(-1)^{i}\prod_{j\neq i}J_{z_j}x\quad\text{for all} \quad x\in V_{\pm}.
$$

Let $\dim\n_{-2}=7$. A similar dimensionality argument as before, using Table~\ref{t:dim}, shows that the cases $(r,s)\notin\{(7,0),(3,4)\}$ do not satisfy the $J^2$-condition.

Case $(r,s)=(3,4)$. There are two irreducible Clifford modules $V_+$ and $V_-$ of dimension 8 that, in this case, are also admissible. They are isometric to $\mathbb R^{4,4}$ and generate isomorphic pseudo $H$-type algebras, see~\cite{FM,FM2}.
The following operators are mutually commuting symmetric involutions:
$$
P_1=J_{z_1}J_{z_2}J_{z_4}J_{z_5},\quad P_2=J_{z_1}J_{z_2}J_{z_6}J_{z_7},\quad P_3=J_{z_1}J_{z_3}J_{z_5}J_{z_7}.
$$
Table~\ref{t:34} shows the commutation relations of the operators $P_j$ and $J_{z_k}$,
where $1$ appears if they commute and $-1$ if they anti-commute.
\begin{table}[h]
\center\caption{Commutation relations of operators}
\begin{tabular}{|c||c|c|c|c|c|c|c|c|c|c|c|c|c|c|}
\hline
\ &$J_{z_1}$&$J_{z_2}$&$J_{z_3}$&$J_{z_4}$&$J_{z_5}$&$J_{z_6}$&$J_{z_7}$
\\
\hline\hline
$P_1$&$-1$&$-1$&1&$-1$&$-1$&1&1
\\
\hline
$P_2$&$-1$&$-1$&1&1&1&$-1$&$-1$
\\
\hline
$P_3$&$-1$&1&$-1$&1&$-1$&1&$-1$
\\
\hline\hline
\end{tabular}\label{t:34}
\end{table}

We observe that $V_+$ decomposes into the orthogonal sum of the common eigenspaces for
the involutions $P_1,P_2,P_3$. Denote $E^{1,1,1}=\{w\in\n_{-1}\,|\,P_1w=P_2w=P_3w=w\}$.
Then all $J_{z_i}E^{1,1,1}$ are different mutually orthogonal eigenspaces for $P_1$, $P_2$, $P_3$.
Therefore, for any pair $J_{z_i}J_{z_j}$, $i\neq j$, there is $k\neq i,j$ such that $J_{z_i}J_{z_j}(w)=\pm J_{z_k}(w)$. Table~\ref{t:vect} shows these eight different eigenspaces.
{\tiny
\begin{table}[h]
\begin{center}
\caption{Eigenspace decomposition: $\Cl_{3,4}$ case}
\begin{tabular}{|l|c|c|c|c|c|c|c|c|} \hline
Involutions&\multicolumn{8}{|c|}{Eigenvalues}\\ \hline
$P_1$&\multicolumn{4}{|c|}{$+1$} &\multicolumn{4}{|c|}{$-1$}\\\hline
$P_2$&\multicolumn{2}{|c|}{$+1$}
&\multicolumn{2}{|c|}{$-1$}&\multicolumn{2}{|c|}{$+1$}
&\multicolumn{2}{|c|}{$-1$}\\\hline
$P_3$&\multicolumn{1}{|c|}{$+1$}
&\multicolumn{1}{|c|}{$-1$}&\multicolumn{1}{|c|}{$+1$}
&\multicolumn{1}{|c|}{$-1$}
&\multicolumn{1}{|c|}{$+1$} &\multicolumn{1}{|c|}{$-1$}
&\multicolumn{1}{|c|}{$+1$} &\multicolumn{1}{|c|}{$-1$}\\
\hline\hline
{Eigenvectors}
&$w$&$J_{z_3}w$&$J_{z_6}w$& $J_{z_7}w$&$J_{z_4}w$&$J_{z_5}w$&$J_{z_2}w$&$J_{z_1}w$
\\
&$\ $&$J_{z_1}J_{z_2}w$&$J_{z_1}J_{z_5}w$& $J_{z_1}J_{z_4}w$&$J_{z_1}J_{z_7}w$&$J_{z_1}J_{z_6}w$&$J_{z_1}J_{z_3}w$&$J_{z_2}J_{z_3}w$
\\
&$\ $&$J_{z_4}J_{z_5}w$&$J_{z_2}J_{z_4}w$& $J_{z_2}J_{z_5}w$&$J_{z_2}J_{z_6}w$&$J_{z_2}J_{z_7}w$&$J_{z_4}J_{z_6}w$&$J_{z_4}J_{z_7}w$
\\
&$\ $&$J_{z_6}J_{z_7}w$&$J_{z_3}J_{z_7}w$& $J_{z_3}J_{z_6}w$&$J_{z_3}J_{z_5}w$&$J_{z_3}J_{z_4}w$&$J_{z_5}J_{z_7}w$&$J_{z_5}J_{z_6}w$
\\\hline
\end{tabular}\label{t:vect}
\end{center}
\end{table}
}

Choosing the basis
$$
\begin{array}{lllllll}
& x_1=w,\quad & x_2=J_{z_3}w,\quad & x_3=J_{z_6}w,\quad & x_4=J_{z_7}w,
\\
&x_5=J_{z_4}w,\quad & x_6=J_{z_5}w,\quad & x_7=J_{z_2}w,\quad & x_8=J_{z_1}w,
\end{array}
$$
for any $x_p$ and any indices $i\neq j$, there is an index $k$ such that
$J_{z_i}J_{z_j}(x_p)=\pm J_{z_k}(x_p)$.
Therefore, for any $x=\sum_{p=1}^{8}\lambda_px_p$, we again obtain
$J_{z_i}J_{z_j}(x)=J_{\tilde z}(x)$ for some $\tilde z\in\n_{-2}$.
Now for any $x\in\n_{-1}$ and for any pair of orthogonal vectors $z,z'\in\n_{-2}$, we argue as in~\eqref{eq:exist12} and conclude that $\n^{3,4}$ with the minimal admissible module satisfies the $J^2$-condition.

Let us show that $\n^{3,4}(W)$ with a non-minimal admissible module $W$ does not satisfy the $J^2$-condition. It is clear that if the $J^2$-condition holds for $\n^{3,4}(W)$ it also holds for $\n^{3,4}(W')$,
where $W'\subset W$ is a submodule. Thus it suffices to verify the case of two summands
$W=V_1\oplus V_2$, where $V_i=V_+,V_-$. All three cases are similar and we consider only the first of them: $V_1=V_2=V_+$.

Let $x\in\n_{-1}$ be of the following form $x=w_{1}+J_{z_k}w_2$, where $w_1\in E^{1,1,1}\cap V_1$ and $w_2\in E^{1,1,1}\cap V_2$ and $k=1$ or $2$. We assume that the admissible metric is such that $\langle w_{\alpha},w_{\alpha}\rangle_{V_+}>0$, $\alpha=1,2$, and normalizing we assume that $w_1,w_2$ are unit vectors. This implies that $\langle x,x\rangle_{\n_{-1}}\neq 0$.  As a pair of orthogonal vectors from $\n_{-2}$ we take the basis vectors $z_3$ and $z_4$.
For an arbitrary vector $z=\sum_{i=1}^{7}a_iz_i$ the following holds
 \begin{eqnarray*}
&\ &\langle J_{z_3}J_{z_4}x,J_zx\rangle_{\n_{-1}}
=
\Big\langle J_{z_3}J_{z_4}(w_1+J_{z_k}w_2),\sum_{i=1}^{7}a_iJ_{z_i}(w_1+J_{z_k}w_2)\Big\rangle_{\n_{-1}}
\\
&=&
a_5\langle J_{z_3}J_{z_4}w_1,J_{z_5}w_1\rangle_{\n_{-1}}
+
\Big\langle J_{z_k}J_{z_3}J_{z_4}w_2,-\sum_{i\neq k}a_iJ_{z_k}J_{z_i}w_2\Big\rangle_{\n_{-1}}
\\
&=&
a_5\langle J_{z_3}J_{z_4}w_1,J_{z_5}w_1\rangle_{\n_{-1}}
-
a_5\langle z_k,z_k\rangle_{\n_{-2}}\langle J_{z_3}J_{z_4}w_2,J_{z_5}w_2\rangle_{\n_{-1}}=0
 \end{eqnarray*}
If the admissible metric $\langle .\,,.\rangle_{\n_{-1}}$ is such that $\langle w_{1},w_{1}\rangle_{\n_{-1}}=-\langle w_{2},w_{2}\rangle_{\n_{-1}}$, then we modify the previous argument by changing the vector $x\in\n_{-1}$ to $x=w_{1}+J_{z_k}w_2$, $k=5,6$ or $7$.
Thus only the algebra $\n^{3,4}(V_+)\simeq\n^{3,4}(V_-)$ satisfies the $J^2$-condition.

Case $(r,s)=(7,0)$. Though this case is known in the literature \cite{CDKR}, it can be treated similarly to the case $(3,4)$ starting from the involutions
$$
P_1=J_{z_1}J_{z_2}J_{z_3}J_{z_4},\quad P_2=J_{z_1}J_{z_2}J_{z_5}J_{z_6},\quad P_3=J_{z_1}J_{z_3}J_{z_5}J_{z_7}
$$
acting on the minimal admissible module $\n_{-1}=V_\pm$ isometric to $\mathbb R^8$.
The conclusion is very similar: only $\n^{7,0}(V_+)\simeq\n^{7,0}(V_-)$ satisfies the $J^2$-condition.
\qedhere
\end{proof}


\appendix


\section{Split versions of the division algebras}\label{Ap.A}

Recall that real rank of a Lie group $G$ is the dimension of the Abelian factor $A$ in the 
Iwasawa decomposition $G=KAN$. 
Real rank 1 simple Lie algebras are $|1|$- or $|2|$-graded
(i.e.\ $\g=\g_{-\nu}\oplus\dots\oplus\g_0\oplus\dots\oplus\g_\nu$, $\nu=1\vee2$), 
and the Lie algebra 
$\n=\op{Lie}(N)=\g_{-2}\oplus\g_{-1}=\z\oplus V$
is of $H$-type satisfying the $J^2$ condition \cite{Ko2}. 

It was noticed by B.\,Kostant \cite{Ko1} that in this case $\R\oplus\z$ is necessarily a division algebra
(so one of $\R$, $\C$\, $\H$, or $\O$). The multiplication structure is as follows.

For any nonzero $x\in V$ the space $A_x$ of \eqref{Ax} is the direct sum
$\mathbb Rx \oplus J_{\mathfrak n_{-2}}x=\mathbb R\oplus\mathfrak z$.
Using this identification, supply the space $A_x$ with the product
$$
(a,z)\cdot(a',z')=(a'',z''),\ \ \text{where}\ \ (a+J_z)(a'+J_{z'})x=(a''+J_{z''})x.
$$
This formula also makes sense for pseudo $H$-type algebras with the $J^2$ condition,
provided that $x$ is non-null (in which case we identify $A_x=\R\oplus\z$). 

The above product $\cdot$ is bilinear, but not necessary associative in general. 
If $\langle z, z\rangle_{\mathfrak z}\neq-a^2$, then the inverse element to $(a,z)$ is equal to
$$
(a,z)^{-1}=\Bigl(\frac{a}{a^2+\langle z, z\rangle_{\mathfrak z}},
\frac{-z}{a^2+\langle z, z\rangle_{\mathfrak z}}\Bigr).
$$
This shows, that for positive-definite metric $\langle\cdot,\cdot\rangle_{\mathfrak z}$,
$A_x=\mathbb R\oplus\mathfrak z$ is a division algebra, while in the
sign-indefinite og negative definite cases it is not.

This agrees with Theorem \ref{th:J2}, where $\dim\z\in\{0,1,3,7\}$. 
The case $\z=0$ is trivial, then $A_x=\R$.
In the other cases, when $\langle\cdot,\cdot\rangle_{\mathfrak z}$ is not positive-definite, we obtain
the split versions of the division algebras.

 \begin{proposition}
The algebra $A_x=\mathbb R\oplus\mathfrak z$ in the cases of pseudo $H$-type Lie algebras 
$\mathfrak n^{0,1}(V)$, $\mathfrak n^{1,2}(V)$, and $\mathfrak n^{3,4}(V)$ from Theorem \ref{th:J2}
is isomorphic to the algebra of split-complex $\C_s$, split-quaternion $\H_s$ or 
split-octonion numbers $\O_s$, respectively.
 \end{proposition}
 
 \begin{proof}
The minimal admissible module $A_x=\mathbb Rx \oplus J_{\mathfrak n_{-2}}x$ (with $x$ non-null)
from the pseudo $H$-type Lie algebra $\mathfrak n^{0,1}$ has 
the structure of split-complex numbers $\C_s$ due to the presence of a linear transformation 
$J_z$, $\langle z,z\rangle_{\mathfrak n_{-2}}=-1$, with $J^2_z={\rm Id}$. 

The minimal admissible module $A_x$ of $\mathfrak n^{1,2}$ has the structure of 
split-quaternion numbers $\H_s$, since the Clifford representations $J_{z_1},J_{z_2},J_{z_3}$, where $z_1,z_2,z_3$ are orthonormal basis for $\mathfrak n_{-2}$, satisfy relations~\eqref{eq:12}. 

The split-octonion structure $\O_s$ on the minimal admissible module $A_x$ of $\mathfrak n^{3,4}$ 
is formed by the Clifford representations $J_{z_j}$, $j=1,\ldots,7$ for an orthonormal basis for 
$\z=\mathfrak n_{-2}$. The split-octonionic relations among $J_{z_j}$ can be verified by using 
the involutions $P_1,P_2,P_3$ and $P_4=J_{z_5}J_{z_6}J_{z_7}$ from the proof of Theorem~\ref{th:J2}.
 \end{proof}


\section{Realization of $J^2$ condition via simple Lie algebras}\label{Ap.B}

Gradings of simple Lie algebras are enumerated by a choice of parabolic subalgebra, 
or in combinatorial terms by a choice of crossed nodes on the Dynkin (in the complex case) or 
the Satake (in the real case) diagrams.
Note that in the real case crosses can be put only on white nodes, and those connected by arrows shall
be crossed simultaneously, see \cite{OV,CS}. 

Real rank one simple Lie algebras have a unique grading (up to an internal automorphism),
but the other non-compact simple Lie algebras with the same complexification have higher ranks
and more choices of grading. There is however a unique choice in each case giving
a pseudo $H$-type algebra with the $J^2$-condition.

 \begin{theorem}\label{simple}
Every  pseudo $H$-type algebra satisfying the $J^2$-condition, is the negative graded part 
(the nilradical of the opposite parabolic) of one of the graded simple Lie algebras $\g$ 
from the following list, numerated as in Theorem ~\ref{th:J2}.
 \end{theorem}
 
 
(1) $BD_\ell/P_1$

\vskip-5pt 
   
 \[
\parbox{3cm}{$\mathfrak{so}(k,2\ell+1-k)$\\
\hphantom{A}\, $1\le k\le\ell$}\qquad
{
 \begin{tiny}
 \begin{tikzpicture}[scale=0.8,baseline=-0pt]
\bond{0,0}; \bond{1,0}; \tdots{2,0}; \bond{3,0}; \bond{4,0}; \bond{5,0}; \tdots{6,0}; 
\bond{7,0}; \dbond{r}{8,0}; 
\DDnode{x}{0,0}{$\Lambda_1$}; \DDnode{w}{1,0}{}; \DDnode{w}{4,0}{$\Lambda_k$}; 
\DDnode{b}{5,0}{}; \DDnode{b}{8,0}{}; \DDnode{b}{9,0}{$\Lambda_{\ell}$}; 
 \useasboundingbox (-0.4,-0.2) rectangle (9.2,0.55); 
 \end{tikzpicture}
 \end{tiny}
 } 
 \]

\vskip-10pt 

  \[
\parbox{3cm}{$\mathfrak{so}(k,2\ell-k)$\\
\hphantom{A}\, $1\le k\le\ell$}\qquad
{
 \begin{tiny}
 \begin{tikzpicture}[scale=0.8,baseline=-0pt]
\bond{0,0}; \bond{1,0}; \tdots{2,0}; \bond{3,0}; \bond{4,0}; \bond{5,0}; \tdots{6,0}; 
\bond{7,0}; \diagbond{u}{8,0}; \diagbond{d}{8,0}; 
\DDnode{x}{0,0}{$\Lambda_1$}; \DDnode{w}{1,0}{}; \DDnode{w}{4,0}{$\Lambda_k$}; 
\DDnode{b}{5,0}{}; \DDnode{b}{8,0}{}; \DDnode{b}{9,0.5}{}; \DDnode{b}{9,-0.5}{$\Lambda_{\ell}$}; 
\node (mark) at (8.4,0.6) {$\Lambda_{\ell-1}$};
 \useasboundingbox (-0.4,-0.2) rectangle (9.2,0.55); 
 \end{tikzpicture}
 \end{tiny}
 } 
 \]
 
\medskip 

(2) $A_\ell/P_{1,\ell}$ 

\vskip-25pt 
 
 \[
\parbox{3cm}{$\mathfrak{su}(k,\ell+1-k)$\\
\hphantom{A}\, $1\le k\le\tfrac{\ell+1}2$}\qquad
{
 \begin{tiny}
 \begin{tikzpicture}[scale=0.8,baseline=4pt]
\bond{0,0}; \tdots{1,0}; \bond{2,0}; \bond{3,0}; \dotbond{4,0}; 
\bond{5,0}; \bond{6,0}; \tdots{7,0}; \bond{8,0}; 
\DDnode{x}{0,0}{$\Lambda_1$\ \ }; \DDnode{w}{2,0}{$\Lambda_k$\ \ }; 
\DDnode{b}{3,0}{}; \DDnode{b}{6,0}{}; \DDnode{w}{7,0}{$\qquad\ \Lambda_{\ell+1-k}$}; 
\DDnode{x}{9,0}{\ $\Lambda_\ell$};
\node (B) at (2,0.1) {}; \node (C) at (7,0.1) {}; \path[<->,font=\scriptsize,>=angle 45] (B) edge [bend left] (C);
\node (A) at (0,0.1) {}; \node (D) at (9,0.1) {}; \path[<->,font=\scriptsize,>=angle 45] (A) edge [bend left] (D);
 \useasboundingbox (-0.4,-0.2) rectangle (9.2,0.55); 
 \end{tikzpicture}
 \end{tiny}
 }
 \]
 
\vskip-10pt 
  
 \[
\parbox{3cm}{$\mathfrak{sl}(\ell+1,\mathbb{R})$
\hphantom{A}}\qquad
{
 \begin{tiny}
 \begin{tikzpicture}[scale=0.8,baseline=-0pt]
\bond{0,0}; \bond{1,0}; \tdots{2,0}; \bond{3,0}; \bond{4,0}; \bond{5,0}; \tdots{6,0}; 
\bond{7,0}; \bond{8,0}; 
\DDnode{x}{0,0}{$\Lambda_1$}; \DDnode{w}{1,0}{$\Lambda_2$}; \DDnode{w}{4,0}{}; 
\DDnode{w}{5,0}{}; \DDnode{w}{8,0}{}; \DDnode{x}{9,0}{$\Lambda_{\ell}$}; 
 \useasboundingbox (-0.4,-0.2) rectangle (9.2,0.55); 
 \end{tikzpicture}
 \end{tiny}
 } 
 \]  

\medskip 

(3) $C_\ell/P_2$ 

\vskip-5pt 
  
 \[
\parbox{3cm}{$\mathfrak{sp}(k,\ell-k)$\\
\hphantom{A}\, $1\le k\le\tfrac{\ell}2$}\qquad
{
 \begin{tiny}
 \begin{tikzpicture}[scale=0.8,baseline=-0pt]
\bond{0,0}; \bond{1,0}; \tdots{2,0}; \bond{3,0}; \bond{4,0}; \bond{5,0}; \tdots{6,0}; 
\bond{7,0}; \dbond{l}{8,0}; 
\DDnode{b}{0,0}{$\Lambda_1$}; \DDnode{x}{1,0}{}; \DDnode{w}{4,0}{$\Lambda_{2k}$}; 
\DDnode{b}{5,0}{}; \DDnode{b}{8,0}{}; \DDnode{b}{9,0}{$\Lambda_{\ell}$}; 
 \useasboundingbox (-0.4,-0.2) rectangle (9.2,0.55); 
 \end{tikzpicture}
 \end{tiny}
 } 
 \]  
  
\vskip-10pt 
  
 \[
\parbox{3cm}{$\mathfrak{sp}(2\ell,\mathbb{R})$
\hphantom{A}}\qquad
{
 \begin{tiny}
 \begin{tikzpicture}[scale=0.8,baseline=-0pt]
\bond{0,0}; \bond{1,0}; \tdots{2,0}; \bond{3,0}; \bond{4,0}; \bond{5,0}; \tdots{6,0}; 
\bond{7,0}; \dbond{l}{8,0}; 
\DDnode{w}{0,0}{$\Lambda_1$}; \DDnode{x}{1,0}{$\Lambda_2$}; \DDnode{w}{4,0}{}; 
\DDnode{w}{5,0}{}; \DDnode{w}{8,0}{}; \DDnode{w}{9,0}{$\Lambda_{\ell}$}; 
 \useasboundingbox (-0.4,-0.2) rectangle (9.2,0.55); 
 \end{tikzpicture}
 \end{tiny}
 } 
 \]  
 
\medskip 

(4) $F_4/P_4$ 

\vskip-8pt 
  
 \[
\parbox{1cm}{FII}\quad
{
 \begin{tiny}
 \begin{tikzpicture}[scale=0.8,baseline=-0pt]
\bond{0,0}; \dbond{r}{1,0}; \bond{2,0}; 
\DDnode{b}{0,0}{}; \DDnode{b}{1,0}{}; \DDnode{b}{2,0}{}; \DDnode{x}{3,0}{}; 
 \useasboundingbox (-0.4,-0.2) rectangle (3.2,0.55); 
 \end{tikzpicture}
 \end{tiny}
 } 
\hspace{75pt}\parbox{1cm}{FI}\quad
{
 \begin{tiny}
 \begin{tikzpicture}[scale=0.8,baseline=-0pt]
\bond{0,0}; \dbond{r}{1,0}; \bond{2,0}; 
\DDnode{w}{0,0}{}; \DDnode{w}{1,0}{}; \DDnode{w}{2,0}{}; \DDnode{x}{3,0}{}; 
 \useasboundingbox (-0.4,-0.2) rectangle (3.2,0.55); 
 \end{tikzpicture}
 \end{tiny}
 }  
 \]   
 
\medskip 
 
Note that the Satake diagrams for $C_\ell$ and $D_\ell$ cases shall be accordingly modified
when $k=\frac{\ell}2$ and $k=\ell-1,\ell$ respecively. The real rank of the Lie algebras is
$k$ when this is applicable, and in the other cases it is $\ell$, $\ell$, $1$ and $4$, respectively. 
The cases of real rank one conicide with the $H$-type Lie algebras studied in \cite{CDKR}.

 \begin{proof}
In case (1), $\n=\g_{-1}$ is simply a vector space with an inner product, and the 
$M$-type algebra is given by the signature of this inner product
$\langle\cdot,\cdot\rangle_{\n_{-1}}$, which is encoded by $k$ and $\dim\n$ 
(or $\ell$ and the choice of diagram $B$ or $D$).

In case (2), $\n=\g_{-2}\oplus\g_{-1}$ is the Heisenberg algebra. 
When $\langle\cdot,\cdot\rangle_{\n_{-2}}$ is positive, the algebra $\n^{1,0}(V)$ is given by 
$\C$-module $V$, which is the sum of $\ell=\frac12\dim\n_{-1}$ irreducible modules $\C$,
and then $\langle\cdot,\cdot\rangle_{\n_{-1}}$ is uniquely specified by the signature that is
encoded by $k$ in $\mathfrak{su}(k,\ell+1-k)$. The algebra $\n^{0,1}(V)$ is unique,
since $V$ is obtained by doubling and the signature is split. This corresponds to 
$\mathfrak{sl}(\ell+1,\R)$.
 
In case (3), the series $\mathfrak{sp}(k,\ell-k)$ is similar to the rank $k=1$ algebra:
all cases have type $\n^{3,0}(V)$ and differ by the signature of the metric on $V$,
which is encoded by $k$. The algebra $\mathfrak{sp}(2\ell,\R)$ corresponds to $\n^{1,2}(V)$,
with the unique module $V$ that is obtained by doubling and has a split signature metric.
Let us give details of this latter identification.
 
Recall that $\mathfrak{sp}(2\ell,\mathbb{R})=\left\{
\begin{pmatrix}D & B\\ C & -D^t\end{pmatrix}:B,C,D\in\mathfrak{gl}_{\ell}(\R),B^t=B,C^t=C\right\}$.
The grading corresponding to the parabolic $\mathfrak{p}_2$ has the following negative part:
 $$
A=\begin{bmatrix}0 & 0 & 0 & 0\\ Y & 0 & 0 & 0\\ Z & X^t & 0 & -Y^t\\ X & 0 & 0 & 0\end{bmatrix}\qquad
\parbox{7cm}{$\g_{-1}=\R^{4(\ell-2)}(X,Y)$\\ \hphantom{AAA} $X,Y$ are $(\ell-2)\times2$ matrices\\
$\g_{-2}=\R^3(Z)$\\ \hphantom{AAA} $Z^t=Z$ is a $2\times2$ matrix}
  $$

If we write $Z=\begin{bmatrix}z_{11} & z_{12}\\ z_{12} & z_{22}\end{bmatrix}$,
$X=\begin{bmatrix}X_1\\ X_2\end{bmatrix}$, $Y=\begin{bmatrix}Y_1\\ Y_2\end{bmatrix}$,
where $X_1,X_2,Y_1,Y_2\in\R^{\ell-2}$ are row-vectors, then the 
brackets of  $\n=\g_{-}=\g_{-2}\oplus\g_{-1}$ are given by
$[A',A'']=A$, where
$z_{ij}=\langle X'_i,Y''_j\rangle+\langle X'_j,Y''_i\rangle-\langle X''_i,Y'_j\rangle-\langle X''_j,Y'_i\rangle$
with respect to the standard scalar Euclidean product $\langle\cdot,\cdot\rangle$ on $\R^{\ell-2}$
(we use the same notations for coordinates of $A',A'',A$ marking them with prime, double prime or nothing).

This means that $\n_{-1}=\oplus_{i=1}^{\ell-2}\mathbb{S}_i$ is the sum of standard (doubled) 4D modules
over the Clifford algebra of $\z=\n_{-2}$ and the brackets are $[\mathbb{S}_i,\mathbb{S}_j]=0$
for $i\neq j$, as well as the same nontrivial map $\Lambda^2\mathbb{S}_i\to\z$ for all $i$.

Let us describe these latter brackets for one module $\mathbb{S}$. 
Denote by $e_1,e_2,f_1,f_2\in\mathbb{S}$, $h_{11},h_{12},h_{22}\in\z$ the bases corresponding 
to the above coordinates. 
The only non-trivial brackets are $[e_1,f_1]=2h_{11}$, $[e_1,f_2]=[e_2,f_1]=h_{12}$, $[e_2,f_2]=2h_{22}$.

The metrics of $\n_{-1}$ and $\n_{-2}$ are given by 
$\langle e_2,f_1\rangle_{\n_{-1}}=-\langle e_1,f_2\rangle_{\n_{-1}}=1$ and
$\langle h_{11},h_{22}\rangle_{\n_{-2}}=-\frac12$, $\langle h_{12},h_{12}\rangle_{\n_{-2}}=1$
(these are the only non-trivial scalar products),
whence the representaiton $J:\z\to\op{End}(\mathbb{S})$:
 $$
J_{h_{11}}= \begin{bmatrix}0 & 1 & 0 & 0\\ 0 & 0 & 0 & 0\\ 0 & 0 & 0 & 1\\ 0 & 0 & 0 & 0\end{bmatrix},\
J_{h_{12}}= \begin{bmatrix}-1 & 0 & 0 & 0\\ 0 & 1 & 0 & 0\\ 0 & 0 & -1 & 0\\ 0 & 0 & 0 & 1\end{bmatrix},\
J_{h_{22}}= \begin{bmatrix}0 & 0 & 0 & 0\\ -1 & 0 & 0 & 0\\ 0 & 0 & 0 & 0\\ 0 & 0 & -1 & 0\end{bmatrix}.
  $$
It is straightforward now to check the pseudo $H$-type and the $J^2$-conditions. 

We can explain this more representation-theoretically. Let $V$ be the standard $\mathfrak{sl}_2$ module.
The module $\z=\mathfrak{ad}=S^2V$ has a unique up to constant factor invariant metric 
$\langle\cdot,\cdot\rangle_{\n_{-2}}$ of signature $(1,2)$, 
which is indeed the Killing form of $\mathfrak{sl}_2$.
The module $\mathbb{S}=V\oplus V^*\simeq V\oplus V$ has also unique up to constant factor
invariant metric coming from the pairing between $V$ and $V^*$, 
and it is $\langle\cdot,\cdot\rangle_{\n_{-1}}$ (this doubling of $V$ is in agreement with Table \ref{t:dim}).

With these choices the universal envelopping algebra $U(\mathfrak{sl}_2)$ action on $\mathbb{S}$
agrees with the Clifford relations, and it is easy to see that this $M$-type algebra is isomorphic to the one 
coming from the Lie algebra $\mathfrak{sp}(6,\R)$ above, which corresponds  to the partial case 
$\ell=3$ of the split form of $C_\ell$ with one irreducible module $\mathbb{S}$.

Finally, in case (4) we have only two algebras. The algebra $\n^{7,0}(V)$ 
corresponds to the real rank one case FII considered in \cite{CDKR}, and 
completely analogously the algebra $\n^{3,4}(V)$ corresponds to FI.
 \end{proof}

Note that in both cases (1) and (2) the Lie algebra $\n$ is of infinite type:
it is the algebra of all formal vector fields on $\n_-$ in the $|1|$-graded case 
and the algebra of formal contact vector fields in the case of contact grading.

On the contrary, cases (3) and (4) are rigid: the Tanaka prolongation of $\n=\g_-$ coincides 
with $\g$. This follows from Yamaguchi's theorem \cite{Y} and agrees with the classification 
\cite{AS2}. Thus these cases are of finite type.

 \begin{remark}
{\rm 
The Lie algebras of all cases in (1) have the same Abelian structure for all $k$, 
it is the metric structure that varies. Similarly, all cases in (2) correspond to the same
Heisenberg algebra (the theorem makes a distinction of $M$-type).

On the other hand, all cases in (3) and (4) are pairwise different as Lie algebras.
Indeed, by the observation before this remark the Tanaka prolongations of these 2-step nilpotent
Lie algebras coincide with simple Lie algebras $\g$ specified in Theorem \ref{simple}.
Since these latter are different, so are the nilpotent Lie algebras $\n$.}
 \end{remark}

This finishes realization and classification of all pseudo $H$-type algebras satisfying the $J^2$ 
condition via nilradicals of parabolics in simple Lie algebras.
 

\end{document}